\begin{document}

\title[Approximate groups]{Approximate (Abelian) groups}

\author{\tsname}
\address{\tsaddress}
\email{\tsemail}

\begin{abstract}
Our aim is to discuss the structure of subsets of Abelian groups which behave `a bit like' cosets (of subgroups).  One version of `a bit like' can be arrived at by relaxing the usual characterisation of cosets: a subset $S$ of an Abelian group is a coset if for every three elements $x,y,z\in S$ we have $x+y-z \in S$.  What happens if this is not true $100\%$ of the time but is true, say, $1\%$ of the time?  It turns out that this is a situation which comes up quite a lot, and one possible answer is called Fre{\u\i}man's theorem.  We shall discuss it and some recent related quantitative advances.
\end{abstract}

\maketitle

\section{Introduction}

The aim of this article is to cover some of the recent developments in the theory of approximate Abelian groups.  Our starting point is a common characterisation of cosets of subgroups: suppose that $G$ is an Abelian group and $A \subset G$ is a coset of a subgroup of $G$ -- we call this a coset \emph{in} $G$.  A simple characterisation of $A$ being a coset in $G$ is that
\begin{equation*}
\textrm{(i) } A \neq \emptyset; \textrm{ and (ii) } x,y,z \in A \Rightarrow x+y-z \in A.
\end{equation*}
The theory of approximate groups is concerned with relaxing these conditions.  Relaxing the first does not deliver particularly exciting results; relaxing the second, however, turns out to be very fruitful.

Our relaxations will be statistical in nature, and so we shall think of $G$ as being discrete and endowed with Haar counting measure.  It follows that we shall be interested in finite sets $A$.  We write
\begin{equation*}
E(A):=\sum_{x,y,z \in G}{1_A(x)1_A(y)1_A(z)1_A(x+y-z)},
\end{equation*}
a quantity which is called the \emph{additive energy} of $A$.  We see that $A$ is a coset if and only if (it is non-empty) and $E(A)=|A|^3$.  Our first question is what happens if condition (ii) is true only a proportion $1-\delta$ of the time.  In particular, what do sets $A$ look like for which
\begin{equation}\label{eqn.largeut}
E(A) \geq (1-\delta)|A|^3,
\end{equation}
where $\delta$ is to be thought of as a small constant, say $\delta \leq 1/10$, and $|A|$ is to be thought of as tending to infinity.

It is instructive to begin with some examples.  The natural way to create sets with this property is based around cosets in $G$.  Indeed, suppose that $H$ is a coset in $G$ and $A$ is any set satisfying
\begin{equation*}
|A \cap H | \geq (1-\epsilon)|A| \textrm{ and }|A\cap H| \geq (1-\eta)|H|.
\end{equation*}
In words this says that $1-\epsilon$ of $A$ is contained in a coset $H$, and $1-\eta$ of $H$ is in the intersection of $A$ and $H$.  Then, after a short calculation, we find that
\begin{equation*}
E(A) \geq (1-O(\epsilon + \eta))|A|^3.
\end{equation*}
It turns out that sets constructed in the above way are essentially the \emph{only} sets with large additive energy in the sense of (\ref{eqn.largeut}).  The following result is classical and has been considered in far more generality than the statement here suggests.  A proof can be read out of Fournier \cite{fou::}, but it seems likely that it was known before then.
\begin{proposition}\label{prop.firstrelax}
Suppose that $G$ is an Abelian group and $A \subset G$ is finite with $E(A) \geq (1-\delta)|A|^3$.  Then there is some coset $H$ in $G$ such that
\begin{equation*}
|A \cap H | \geq (1-O(\delta^{1/2}))|A| \textrm{ and } |A \cap H | \geq (1-O(\delta^{1/2}))|H|.
\end{equation*}
\end{proposition}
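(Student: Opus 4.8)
The plan is to pass to the difference--representation function of $A$ and finish with a sharp sumset inequality (Kneser's theorem). Unwinding the definition, $E(A) = \#\{(x,y,z,w)\in A^4 : x+y=z+w\} = \sum_t r(t)^2$, where $r(t) := |A\cap(A+t)| = \#\{(a,b)\in A^2 : a-b=t\}$; note $r(0)=|A|$, $r(-t)=r(t)$, $r(t)\le|A|$ for all $t$, and $\sum_t r(t)=|A|^2$. The hypothesis $E(A)\ge(1-\delta)|A|^3$ then becomes $\sum_t r(t)(|A|-r(t)) = |A|^3 - E(A) \le \delta|A|^3$, so most differences are represented almost the maximal number $|A|$ of times. (We may assume throughout that $\delta$ is smaller than a suitable absolute constant, since for larger $\delta$ the claimed bounds are vacuous.)

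First I would fix a threshold $c=\delta^{1/2}$ -- any $c$ with $\delta\ll c\ll 1$ works, and this choice balances the two error terms below -- and set $T:=\{t : r(t)\ge(1-c)|A|\}$. Since $r(t)<(1-c)|A|$ off $T$, we get $c|A|\sum_{t\notin T}r(t)\le\sum_t r(t)(|A|-r(t))\le\delta|A|^3$, hence $\sum_{t\notin T}r(t)\le(\delta/c)|A|^2$; comparing with $\sum_t r(t)=|A|^2$ and $r\le|A|$ forces $(1-\delta/c)|A|\le|T|\le|A|/(1-c)$, i.e. $|T|=(1+O(\delta^{1/2}))|A|$. Moreover $0\in T$, $T=-T$, and for $s,t\in T$ the inclusion $A\setminus(A+s+t)\subseteq\big(A\setminus(A+t)\big)\cup\big((A+t)\setminus(A+s+t)\big)$ together with $|A\setminus(A+t)|=|A|-r(t)\le c|A|$ and $|(A+t)\setminus(A+s+t)|=|A\setminus(A+s)|\le c|A|$ gives $|A\cap(A+s+t)|\ge(1-2c)|A|$; thus $T+T\subseteq\{t : r(t)\ge(1-2c)|A|\}$, a set of size at most $|A|/(1-2c)$. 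Combining, $|T+T|\le(1+O(\delta^{1/2}))|T|$: the set $T$ has doubling constant $1+O(\delta^{1/2})$, which for $\delta$ small is strictly below $2$.

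Now I would invoke Kneser's theorem: letting $H_0:=\{g : g+T+T=T+T\}$ be the stabiliser of $T+T$ (a finite subgroup), Kneser gives $|T+T|\ge 2|T+H_0|-|H_0|$. Since $T\subseteq T+H_0$ and $T+H_0$ is a union of $H_0$-cosets, $|T+H_0|=m|H_0|$ for an integer $m\ge 1$ with $m|H_0|\ge|T|$; feeding this into $2m|H_0|-|H_0|\le|T+T|\le(1+O(\delta^{1/2}))m|H_0|$ forces $m=1$ once the doubling constant is below $2$, so $T+H_0=H_0$, whence $T\subseteq H_0$ and $T+T=H_0$, with $|H_0|=|T+T|\le(1+O(\delta^{1/2}))|T|\le(1+O(\delta^{1/2}))|A|$. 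Finally $\#\{(a,a')\in A^2 : a-a'\in T\}=\sum_{t\in T}r(t)\ge(1-\delta/c)|A|^2=(1-O(\delta^{1/2}))|A|^2$, so by averaging there is $a_0\in A$ with $\#\{a\in A : a-a_0\in T\}\ge(1-O(\delta^{1/2}))|A|$; as $T\subseteq H_0$, every such $a$ lies in $H:=a_0+H_0$, giving $|A\cap H|\ge(1-O(\delta^{1/2}))|A|$, and since $|H|=|H_0|\le(1+O(\delta^{1/2}))|A|$ this also yields $|A\cap H|\ge(1-O(\delta^{1/2}))|H|$.

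I expect the only genuinely delicate point to be this last structural step: small doubling on its own (Pl\"unnecke--Ruzsa and the like) would merely trap $T$ inside a subgroup of size $O(|T|)$ with an uncontrolled multiplicative constant, which is useless for the second inequality. What makes the argument work is the \emph{sharpness} of Kneser's theorem, which exploits that the doubling constant is below $2$ to force the stabiliser of $T+T$ to be essentially all of $T+T$; everything else is routine counting, modulo the harmless reduction to $\delta$ small.
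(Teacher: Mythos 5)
The paper does not actually prove Proposition \ref{prop.firstrelax}; it only remarks that a proof ``can be read out of Fournier'', whose route goes through near-extremisers of Young's convolution inequality. Your argument is a self-contained combinatorial proof and, as far as I can tell, it is correct: the identity $|A|^3-E(A)=\sum_t r(t)(|A|-r(t))$, the construction of the popular-difference set $T$ with $|T|=(1+O(\delta^{1/2}))|A|$, the containment $T+T\subseteq\{t:r(t)\ge(1-2c)|A|\}$ via the two-step covering of $A\setminus(A+s+t)$, and the deduction $|T+T|\le(1+O(\delta^{1/2}))|T|$ all check out, and the appeal to Kneser's theorem is exactly the right tool: you correctly identify that a generic small-doubling/covering argument would lose an absolute multiplicative constant and ruin the second inequality $|A\cap H|\ge(1-O(\delta^{1/2}))|H|$, whereas the sharp form of Kneser with doubling strictly below $2$ forces $T+H_0=H_0$ and hence $|H_0|\le(1+O(\delta^{1/2}))|A|$. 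The final averaging step locating the coset $a_0+H_0$ is also fine. Compared with the functional-analytic route the paper cites, your argument is more elementary and stays entirely inside the discrete group, at the cost of invoking Kneser's theorem as a black box; both yield the same $\delta^{1/2}$ dependence, which (as the paper notes) is the right shape for the rough equivalence. The only cosmetic caveats are the implicit assumption $A\neq\emptyset$ and the (correctly flagged) reduction to $\delta$ small, both harmless.
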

The main strength of this result is that it is a rough equivalence: every set satisfying the conclusion also satisfies the hypothesis with $\delta$ replaced by $O(\delta^{1/2})$ so that up to powers the hypothesis and conclusion are equivalent.

The main weakness of the result is that there may be very few sets satisfying the hypothesis.  For example, suppose that $G=\Z/p\Z$ where $p$ is a prime.  Then $G$ has no non-trivial subgroups, so if $A$ is of `intermediate' size then a short argument from Proposition \ref{prop.firstrelax} tells us we must have $E(A) \leq (1-\Omega(1))|A|^3$; equivalently, no set of `intermediate' size satisfies the hypothesis.

This weakness highlighted in the above discussion can be rectified by a further relaxation of condition (ii), and this is the main concern of the paper.  We ask what happens if condition (ii) is true only a proportion $\delta$ of the time.  In particular, what do sets $A$ look like for which
\begin{equation}\label{eqn.large}
E(A) \geq \delta|A|^3,
\end{equation}
where this time $\delta$ is to be thought of as tending to $0$ (if at all) much more slowly than $|A|$ tends to infinity.

Once again we start by trying to construct examples of such sets.  As before we can use cosets to generate a large class of sets with large additive energy (in the sense of (\ref{eqn.large}) this time), but what is more interesting is that a genuinely new sort of structure emerges, that of arithmetic progressions.

If $P$ is an arithmetic progression a short calculation shows that $E(P) \sim \frac{2}{3}|P|^3$, but it turns out that this is just one example from a much wider class.

\begin{definition}[Convex coset progressions]  A \emph{convex progression} in $G$ is a set of the form $\phi(Q \cap \Z^d)$ where $Q$ is a symmetric convex body about the origin in $\R^d$, and $\phi:\Z^d \rightarrow G$ is a homomorphism.  A \emph{convex coset progression} in $G$ is then a set $H+P$ where $P$ is a convex progression and $H$ is a coset in $G$.  In both cases we say that the progression is \emph{$d$-dimensional}.
\end{definition}
It is worth making two small remarks here.  First, dimension is monotonic so that if a convex coset progression $M$ is $d$-dimensional then it is also $d'$-dimensional for all $d' \geq d$; secondly, in this article we are only interested in dimension up to a constant multiple.

Crucially convex coset progressions inherit growth properties from the convex body used in their definition.  It is not hard to show that if $M$ is a convex coset progression then
\begin{equation*}
|M+M| \leq \exp(O(d))|M|.
\end{equation*}

We now return to constructing sets with large additive energy.  Suppose that $M$ is a $d$-dimensional convex coset progression and $A$ is any set satisfying
\begin{equation*}
|A \cap M| \geq \eta |A|\textrm{ and } |A \cap M| \geq \epsilon |M|.
\end{equation*}
Then a short calculation using the fact that $|M+M| \leq \exp(O(d))|M|$ tells us that
\begin{equation*}
E(A) \geq \epsilon \eta^3\exp(-O(d))|A|^3.
\end{equation*}
Again it turns out that sets constructed in the above way are essentially the only sets having large additive energy.  The following result captures this fact and is a combination of the Balog-Szemer{\'e}di Lemma \cite{balsze::} and the Green-Ruzsa Theorem \cite{greruz::0}.  It should be remarked that the Green-Ruzsa Theorem is also called Fre{\u\i}man's theorem for Abelian groups and extends Fre{\u\i}man's theorem \cite{fre::} from $\Z$ to general Abelian group. 
\begin{theorem}\label{thm.frei}
Suppose that $G$ is an Abelian group and $A \subset G$ is finite with $E(A) \geq \delta|A|^3$.  Then there is a convex coset progression $M$ such that
\begin{equation*}
|A \cap M| \geq \eta(\delta)|A|\textrm{, } |A\cap M| \geq \epsilon(\delta)|M| \textrm{ and } \dim M \leq d(\delta),
\end{equation*}
for some (increasing) functions $\eta,\epsilon$ and $d$.
\end{theorem}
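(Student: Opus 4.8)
The plan is to derive Theorem~\ref{thm.frei} by combining its two advertised ingredients. First I would use the Balog--Szemer\'edi lemma to convert the hypothesis $E(A)\ge\delta|A|^3$ -- which only asserts that \emph{many} quadruples $(x,y,z,x+y-z)$ lie in $A^4$ -- into the much cleaner statement that a large \emph{subset} of $A$ has small doubling; then I would feed that subset into the Green--Ruzsa theorem to cover it by a low-dimensional coset progression of comparable size; and finally I would read off the three conclusions by counting. The only conceptual glue needed is the observation that an ordinary generalised arithmetic (coset) progression -- being of the form $H+\phi(Q\cap\Z^d)$ with $Q=[-L_1,L_1]\times\cdots\times[-L_d,L_d]$ a box and $\phi$ a homomorphism -- is in particular a convex coset progression in the sense of the definition above, of the same dimension.

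In detail, Step~1: applying the Balog--Szemer\'edi lemma to $A$ with energy parameter $\delta$ produces $A'\subseteq A$ with $|A'|\ge \delta^{O(1)}|A|$ and $|A'+A'|\le K|A'|$, where $K=\delta^{-O(1)}$. (This is precisely where the slow-decay assumption on $\delta$ matters: for constant $\delta$ the set $A'$ is a fixed positive proportion of $A$, and the argument degrades gracefully as long as $\delta$ is not too small compared with $|A|$.) Step~2: apply the Green--Ruzsa theorem to the small-doubling set $A'$. This yields a coset progression $M=H+P$, with $H\le G$ a finite subgroup and $P$ a proper generalised arithmetic progression, such that $A'\subseteq M$, $\dim M\le d_0(K)$ and $|M|\le \exp(d_1(K))|A'|$, for functions $d_0,d_1$ that are polynomial in $K$ in the Green--Ruzsa bounds. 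By the glue remark, $M$ is a convex coset progression, still of dimension at most $d_0(K)$.

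Step~3 is bookkeeping. Since $A'\subseteq A\cap M$ we have $|A\cap M|\ge|A'|$, whence
\[
|A\cap M| \ge |A'| \ge \delta^{O(1)}|A|, \qquad |A\cap M| \ge |A'| \ge \exp(-d_1(K))|M| = \exp(-\delta^{-O(1)})|M|,
\]
while $\dim M\le d_0(K)=\delta^{-O(1)}$. This is the conclusion of the theorem with $\eta(\delta)=\delta^{O(1)}$, $\epsilon(\delta)=\exp(-\delta^{-O(1)})$ and $\dim M$ controlled by $\delta^{-O(1)}$; these have the required monotonicity, and one may pass to monotone envelopes harmlessly, since the hypothesis $E(A)\ge\delta|A|^3$ only weakens as $\delta$ decreases and since $\dim M$ may always be inflated (dimension being monotone).

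The hard part is entirely inside Step~2: the Green--Ruzsa theorem is a deep result, and a self-contained proof would have to reconstruct the Pl\"unnecke--Ruzsa sumset inequalities, Ruzsa's covering lemma, Ruzsa's modelling argument (embedding a large part of $A'-A'$ into a finite-rank torsion-free group, or into a vector space over a finite prime field, with additive structure preserved), and then the Fre{\u\i}man--Ruzsa theorem in the model setting -- for which the standard route is Bogolyubov's lemma, to locate a large Bohr set inside $2A'-2A'$, followed by Minkowski's second theorem, to place a generalised arithmetic progression inside that Bohr set. Step~1 is not soft either, but it is by now routine in the polynomial (Gowers) form, and Step~3 is trivial; so if I were actually writing this out I would simply cite \cite{balsze::} and \cite{greruz::0} for Steps~1 and~2 and reserve my effort for making the constants in Step~3 explicit.
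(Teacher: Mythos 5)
Your argument is correct and is exactly the proof the paper itself offers for this statement: Theorem~\ref{thm.frei} is introduced there as ``a combination of the Balog-Szemer{\'e}di Lemma and the Green-Ruzsa Theorem,'' which is precisely your Steps 1--3, with your observation that a symmetric generalised arithmetic progression is a convex progression with $Q$ a box (and the coset $H$ absorbing any translate) supplying the only glue required. The body of the paper then re-derives the theorem by a different decomposition -- a combinatorial half locating a translate of $X-X$ with $X$ of relative polynomial growth (via the Croot--Sisask lemma and its refinements), followed by the algebraic Theorem~\ref{thm.polygrowi} converting polynomial growth into a convex coset progression -- but that reorganisation is aimed at improving the bounds on $\eta,\epsilon,d$, not at establishing the qualitative statement you were asked to prove.
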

While the result is appealing in its own right, since the breakthrough work of Gowers \cite{gow::4} it has become a central result in additive combinatorics and allied areas as a result of numerous applications.  This wealth of applications provides strong empirical evidence for the utility of the theorem, but there are also some rather compelling theoretical reasons why it should be so useful.  We turn to some of these now.
\begin{enumerate}
\item The hypothesis of the theorem is robust under small perturbations.  This is particularly useful because while the input is flexible, the output, a convex coset progression, is rather rigid.
\item The hypothesis of the theorem is easily satisfied.  From a theoretical perspective this is because convex coset progressions are ubiquitous in contrast to subgroups (in some groups).
\item A convex coset progression supports a lot of structure. While it is not a coset, it behaves enough like a coset that it can support many commonly used analytic arguments, and in particular a sort of approximate harmonic analysis.  This means that many results for groups can also be established for convex coset progressions.
\item The result is a rough equivalence: any set satisfying the conclusion of the theorem satisfies the hypothesis with $\delta$ replaced by $\epsilon(\delta)\eta(\delta)^3\exp(-O(d(\delta)))$.\end{enumerate}
The quality of the rough equivalence serves as a measure of the strength of Theorem \ref{thm.frei} and conjecturally this equivalence is polynomial.  Our interest in this paper is in the quality of this equivalence -- the strength of the bounds on $\eta(\delta),\epsilon(\delta)$ and $d(\delta)$ -- the stronger they are the stronger the results in applications.  Conjecturally we can take
\begin{equation*}
\log \eta(\delta)^{-1}, \log \epsilon(\delta)^{-1},d(\delta) = O(\log \delta^{-1})
\end{equation*}
in Theorem \ref{thm.frei}.  This is called the Polynomial Fre{\u\i}man-Ruzsa conjecture, and if true means that any set satisfying the output of Theorem \ref{thm.frei} automatically satisfies the hypothesis with $\delta$ replaced by $\delta^{O(1)}$ -- the rough equivalence would be rather strong.

Combining Gowers' refinement of the Balog-Szemer{\'e}di Lemma \cite{gow::4} with the Green-Ruzsa Theorem \cite{greruz::0} gives
\begin{equation*}
\log \eta(\delta)^{-1}, \log \epsilon(\delta)^{-1},d(\delta) = O(\delta^{-O(1)}).
\end{equation*}
Green and Ruzsa actually gave an explicit value for the constant implied in the $O(1)$ term and there was some work improving this constant before an important breakthrough by Schoen \cite{sch::1} who showed that it is smaller than any power.  Specifically he proved that one may take
\begin{equation}\label{eqn.sch}
\log \eta(\delta)^{-1}, \log \epsilon(\delta)^{-1},d(\delta) = O(\exp(O(\sqrt{\log \delta^{-1}}))).
\end{equation}
Following on from this we were recently able to show in \cite{san::00} that
\begin{equation}\label{eqn.finalbounds}
\log \eta(\delta)^{-1}, \log \epsilon(\delta)^{-1},d(\delta) = O(\log^{O(1)}\delta^{-1}).
\end{equation}
The results we have chosen to mention above are far from a complete history of work on Theorem \ref{thm.frei}.  Indeed, as a centrepiece of additive combinatorics it has been investigated from many different angles, but we do not have the space to discuss these here.  The interested reader may wish to consult the notes \cite{ruz::03} of Ruzsa.

\section{De-coupling the argument}

The arguments to prove Theorem \ref{thm.frei} separate naturally into two parts: one more combinatorial, and one more algebraic.  The quality of the bounds is almost entirely dependent on the combinatorial part of the argument and that is where most of the recent progress has been made, so we shall now briefly explain how to de-couple the two parts so that we can then focus on the combinatorial one.

The algebraic part of the argument essentially shows that being a convex coset progression is equivalent to satisfying a relative polynomial growth condition.  The latter condition is easier to satisfy combinatorially, so that proving Theorem \ref{thm.frei} comes down to finding a set with relative polynomial growth rather than a convex coset progression.

To be more concrete we start with an observation about convex sets: if $Q$ is a convex set in $\R^d$ then $\mu(nQ) \leq n^d\mu(Q)$ for all $n \geq 1$, and this property is inherited by $d$-dimensional convex coset progressions.  We say that a set $X$ has \emph{relative polynomial growth of order $d$} if
\begin{equation*}
|nX| \leq n^d|X| \textrm{ for all } n \geq 1,
\end{equation*}
where $nX:=X+\dots + X$ and the sum is $n$-fold\footnote{In particular $nX:=\{x_1+\dots+x_n: x_1,\dots,x_n \in X\}$.}.  It turns out that if $M$ is a $d$-dimensional coset progression then $M$ has relative polynomial growth of order $O(d)$, and that having relative polynomial growth is essentially characteristic for convex coset progressions.  To this end we have the following theorem.
\begin{theorem}\label{thm.polygrowi}
Suppose that $G$ is an Abelian group and $X \subset G$ has relative polynomial growth of order $d$.  Then there is a (centred) convex coset progression $M$ in $G$ such that
\begin{equation*}
X-X \subset M\textrm{, } |M| \leq \exp(O(d\log d))|X| \textrm{ and } \dim M =O(d\log d).
\end{equation*}
\end{theorem}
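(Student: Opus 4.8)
The plan is to prove this as a version of Freiman's theorem tailored to sets of polynomial growth, taking care to use the growth hypothesis at \emph{every} scale rather than merely the doubling estimate $|2X| \le 2^d|X|$: feeding that doubling constant $K = 2^d$ into Theorem~\ref{thm.frei} (or any of its quantitative refinements) only yields bounds that are a fixed power of $d$ in the dimension, whereas we are after the sharper $\exp(O(d\log d))$. I would first normalise the set: replacing $X$ by $X \cup (-X) \cup \{0\}$ multiplies the order of polynomial growth by at most a constant -- this needs a short computation bounding $|aX - bX|$ for $a+b \le n$ via the Ruzsa triangle inequality -- so I may assume $X = -X$ and $0 \in X$. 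Then $\Gamma := \langle X\rangle = \bigcup_{n \ge 1} nX$ is a finitely generated abelian group, hence $\Gamma \cong T \oplus \Z^r$ with $T$ finite.

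The geometric core of the argument is a statement I would isolate and prove first, in a model group $\Z^m$: if $Y \subseteq \Z^m$ is symmetric, contains $0$, and has relative polynomial growth of order $O(d)$ (up to a harmless multiplicative constant), then the sublattice $\Lambda := \langle Y \rangle$ has rank $O(d)$, and, writing $Q := \mathrm{conv}(Y - Y)$, one has $Q \cap \Lambda \subseteq t\,(Y - Y)$ for some $t$ polynomial in $\mathrm{rk}\,\Lambda$. The rank bound is obtained by noting that a symmetric generating set of $\Lambda$ places the $\ell^1$-octahedron $\{x : \|x\|_1 \le n'\} \cap \Lambda$, which has at least $(cn'/\mathrm{rk}\,\Lambda)^{\mathrm{rk}\,\Lambda}$ points, inside $O(n'\,\mathrm{rk}\,\Lambda)\cdot Y$; comparing this with $|O(n'\,\mathrm{rk}\,\Lambda)\cdot Y| \le (n')^{O(d)}|Y|$ and sending $n' \to \infty$ forces $\mathrm{rk}\,\Lambda = O(d)$. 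The covering statement is a geometry-of-numbers fact -- Carath\'eodory together with Minkowski's second theorem applied to $Q$, possibly after replacing $Q$ by a box adapted to a reduced basis, which is what produces the extra $\exp(O(d\log d))$. Combining the two with polynomial growth gives $|Q \cap \Lambda| \le |t\,(Y - Y)| \le \exp(O(d\log d))|Y|$, so that the convex progression $Q \cap \Lambda$ (of dimension $\mathrm{rk}\,\Lambda = O(d)$) contains $Y - Y$ and has size at most $\exp(O(d\log d))|Y|$: precisely the sought conclusion, but in the model.

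To reduce to this model I would run a Green--Ruzsa-style modelling step: produce a subgroup $H \le \Gamma$, an integer $m = O(d\log d)$, a finite abelian group $\Gamma_0$ with $\log|\Gamma_0| = O(d\log d)$, and a map carrying $X$ -- faithfully on sums of boundedly many elements, which is all the eventual containment $X - X \subseteq M$ requires -- into a subset of $\Z^m \times \Gamma_0$ that factors through $\Gamma/H$, with $|H|\cdot|\Gamma_0| \le \exp(O(d\log d))|X|$. The role of the decomposition is that $H$ absorbs exactly the torsion directions that $X$ fills \emph{densely} (these are controlled in size but cost no dimension), whereas the free rank together with the high-order, ``progression-like'' torsion directions are folded into the $\Z^m$-coordinate; the set $\{0,e_1,\dots,e_m\} \subseteq (\Z/2\Z)^m$, where polynomial growth forces $m = O(d\log d)$, illustrates why such a split must exist. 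Applying the geometric lemma to the $\Z^m$-part of the model, pulling the resulting convex progression back to $\Gamma$ (using that $\Z^D$ is free, so homomorphisms out of it lift along surjections), and adjoining $H$ and $\Gamma_0$ to the coset yields a convex coset progression $M$ with $\dim M = O(d\log d)$ and $|M| \le \exp(O(d\log d))|X|$; faithfulness of the model on bounded sums delivers the exact containment $X - X \subseteq M$, and one checks against the definition that $M$ indeed has the form $H' + \phi(Q' \cap \Z^D)$.

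The main obstacle is the modelling step. One cannot simply quotient by the full torsion $T$: its order need not be controlled by $|X|$ at all, as $X = \{0,1\} \subseteq \Z/N\Z$ shows. Nor can one locate $H$ cheaply by the usual route of finding a subgroup inside $2X - 2X$ via Bogolyubov's lemma and modelling the quotient -- that loses a factor polynomial in the doubling constant $K = 2^d$ in the dimension, i.e. $d^{O(1)}$, not $O(d\log d)$. Making this step efficient, so that both $\log|H|$ and $m$ emerge as $O(d\log d)$, is exactly where the polynomial-growth hypothesis must be exploited in full -- for instance by passing to a carefully chosen scale $nX$, or by bounding the low-exponent torsion of $\Gamma$ directly in terms of $|X|$ and $d$ -- and doing so while keeping the containment $X - X \subseteq M$ literal rather than merely ``up to a large fraction of $X$'' is the crux.
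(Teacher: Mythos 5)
Your proposal correctly identifies where the difficulty lies, but it does not resolve it: the Green--Ruzsa-style modelling step -- producing $H$, $m=O(d\log d)$ and $\Gamma_0$ with $\log|\Gamma_0|=O(d\log d)$, a map that is Fre{\u\i}man-faithful on bounded sums, and the size bound $|H|\cdot|\Gamma_0|\leq \exp(O(d\log d))|X|$ -- is stated as a target and then explicitly flagged as ``the crux'', with only speculative hints (``passing to a carefully chosen scale $nX$'', ``bounding the low-exponent torsion directly'') about how to carry it out. Since you yourself observe that the two standard routes (quotienting by the full torsion of $\langle X\rangle$; Bogolyubov plus modelling) both fail to give the required efficiency or the literal containment $X-X\subseteq M$, the argument as written has a genuine hole at its centre. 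The geometric endgame in $\Z^m$ (rank bound by comparing $(n')^{\mathrm{rk}\,\Lambda}$ growth against $(n')^{O(d)}|Y|$, then Minkowski's second theorem to cover $\mathrm{conv}(Y-Y)\cap\Lambda$ by $t(Y-Y)$) is sound in spirit and does match the $\exp(O(d\log d))$ losses, but it only becomes usable once the torsion has been unfolded into integer coordinates, which is exactly the unproven step.

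For comparison, the paper's route avoids constructing a global Fre{\u\i}man model altogether. One works directly in $G$ with Bohr sets of the large spectrum: Proposition \ref{prop.lowerbound} gives the \emph{literal} containment $X-X\subset \Bohr(\LSpec(lX,\epsilon),2\epsilon\sqrt{2K})$ after choosing, by pigeonhole from polynomial growth, a scale $l$ at which $|lX|\leq K|(l-1)X|$ with $K=O(1)$; Proposition \ref{prop.LSpecproperties} bounds the size of this Bohr set by $\exp(O(d\log \epsilon^{-1}d))|X|$; and Proposition \ref{prop.coset} shows the Bohr set is itself a low-dimensional convex coset progression. That last step is where your missing ingredient lives: the Ruzsa embedding $R_\Gamma(x)(\gamma)=\frac{1}{2\pi}\arg(\gamma(x))$ is a Fre{\u\i}man morphism on a suitably small Bohr set and embeds it into a lattice in $\R^{|\Gamma|}$, which is precisely the device that turns high-order torsion directions into integer coordinates so that the geometry of numbers (Chang's covering lemma, Minkowski) can be applied. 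If you want to salvage your outline, you should either prove your modelling lemma -- which essentially amounts to redoing the Green--Ruzsa argument with the polynomial-growth input -- or replace it with the Bohr-set/Ruzsa-embedding mechanism above.
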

We shall sketch the proof of this Theorem in \S\ref{sec.asymfrei}, but it is not the focus of the paper and is more or less a rearrangement of the ideas in Green and Ruzsa \cite{greruz::0}.

By considering the $N \times \dots \times N$ cube in $\Z^d$ we see that the result is tight up to the logarithmic factors and we think of it as providing an equivalence between $d$-dimensional convex coset progressions and sets with relative polynomial growth of order $d$.  Indeed, instead of proving Theorem \ref{thm.frei} we shall prove the following.
\begin{theorem}\label{thm.combi}
Suppose that $G$ is an Abelian group and $A \subset G$ is such that $E(A) \geq \delta |A|^3$.  Then there is a set $Y$ which is a translate of $X-X$ such that
\begin{equation*}
|A \cap Y| \geq\eta'(\delta) |A|\textrm{, }|A \cap Y| \geq \epsilon'(\delta) |Y|\textrm{ and } |nX| \leq n^{d'(\delta)}|X| \textrm{ for all }n \geq 1
\end{equation*}
for some (increasing) functions $\eta',\epsilon'$ and $d'$.
\end{theorem}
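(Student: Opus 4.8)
The plan is to decouple once more, this time within the combinatorial part of the argument. From the energy hypothesis I would first pass, by a Balog-Szemer\'{e}di-Gowers argument, to a large subset of $A$ with small doubling; then upgrade ``small doubling'' to the much stronger ``relative polynomial growth'' by means of a strong Bogolyubov-Ruzsa lemma, which locates a very structured set inside a bounded sumset; and finally transfer that structure back onto $A$ with the Ruzsa covering lemma.

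Concretely, the Balog-Szemer\'{e}di-Gowers lemma in Gowers' quantitative form \cite{gow::4} shows that $E(A) \geq \delta|A|^{3}$ produces a subset $A' \subseteq A$ with $|A'| \geq \delta^{O(1)}|A|$ and $|A'+A'| \leq \delta^{-O(1)}|A'|$; write $K := \delta^{-O(1)}$ for this doubling constant, and recall that by the Pl\"{u}nnecke-Ruzsa inequality every iterated sumset then satisfies $|nA'-mA'| \leq K^{n+m}|A'|$. The crux is the following Bogolyubov-Ruzsa step: a set $A'$ of doubling $K$ has the property that $2A'-2A'$ contains a symmetric convex coset progression $X$ of dimension $\log^{O(1)}K$ with $|X| \geq \exp(-\log^{O(1)}K)|A'|$. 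By the remark preceding Theorem~\ref{thm.polygrowi}, such an $X$ has relative polynomial growth of some order $d' = \log^{O(1)}K$, and it will be the set we want.

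Granting this Bogolyubov-Ruzsa step, the theorem follows quickly. Since $X \subseteq 2A'-2A'$ we have $|A'+X| \leq |3A'-2A'| \leq K^{O(1)}|A'| \leq \exp(\log^{O(1)}K)|X|$, so the Ruzsa covering lemma writes $A' \subseteq X-X+T$ with $|T| \leq \exp(\log^{O(1)}K)$; pigeonholing over $T$ gives a translate $Y$ of $X-X$ with $|A'\cap Y| \geq \exp(-\log^{O(1)}K)|A'|$. As $X$ is symmetric, $|Y| = |X-X| = |2X| \leq \exp(\log^{O(1)}K)|X| \leq \exp(\log^{O(1)}K)|2A'-2A'| \leq \exp(\log^{O(1)}K)|A'|$, whence also $|A'\cap Y| \geq \exp(-\log^{O(1)}K)|Y|$. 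Since $A'\subseteq A$ and $K = \delta^{-O(1)}$, the theorem holds with $\eta'(\delta) = \epsilon'(\delta) = \exp(-\log^{O(1)}\delta^{-1})$ and $d'(\delta) = \log^{O(1)}\delta^{-1}$; feeding $X$ into Theorem~\ref{thm.polygrowi} then recovers Theorem~\ref{thm.frei} with the bounds (\ref{eqn.finalbounds}).

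The main obstacle is the Bogolyubov-Ruzsa step itself, which is where essentially all of the recent progress lies. The classical Fourier-analytic proof --- Bogolyubov's $L^{2}$ argument together with Chang's spectral theorem --- produces a coset progression of dimension only $K^{O(1)}$, which is far too lossy. To reach polylogarithmic dimension I would follow \cite{san::00}: use Croot-Sisask almost-periodicity to locate, inside a bounded sumset of $A'$, a large set on which a convolution such as $1_{A'}\ast 1_{-A'}$ is $L^{p}$-almost-invariant, and exploit this in place of Chang's theorem to run a Bogolyubov-type argument whose output has dimension only $\log^{O(1)}K$. The delicate point throughout is the bookkeeping: the density of the structured set and its dimension must each be controlled with only polylogarithmic losses at every stage, and it is the efficiency of this accounting --- rather than any single new idea --- that ultimately governs the quality of the bounds in Theorem~\ref{thm.frei}.
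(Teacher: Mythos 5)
Your argument is correct, but it reaches Theorem \ref{thm.combi} by a genuinely different route from the paper's, and in a sense a backwards one. The architecture here is: Balog-Szemer{\'e}di-Gowers reduces to the small-doubling case (Theorem \ref{thm.combi2}), and then Croot--Sisask is applied \emph{directly} --- the almost-periodicity set $X$ itself, not a coset progression extracted from a sumset, is the set whose relative polynomial growth is certified, via $kX \subset 2A-2A$ together with Pl{\"u}nnecke's inequality and Chang's covering lemma, while the translate $Y$ of $X-X$ correlating with $A$ comes from an $\ell^2$ averaging argument applied to $1_A \ast 1_A \ast \mu_{X-X}$ (refined in \S\ref{sec.schoen} and \S\ref{sec.new}). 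The conversion to a convex coset progression is deliberately postponed to Theorem \ref{thm.polygrowi}. You instead invoke the full quantitative Bogolyubov--Ruzsa lemma of \cite{san::00} as a black box --- which is essentially the \emph{conjunction} of Theorems \ref{thm.combi2} and \ref{thm.polygrowi} --- and then transfer the structure back to $A$ with the Ruzsa covering lemma and pigeonholing. That transfer step is carried out correctly, and since \cite{san::00} is established independently there is no logical circularity; but note that in the organisation of this paper Theorem \ref{thm.combi} is an ingredient \emph{of} the Bogolyubov--Ruzsa lemma rather than a consequence of it, so your proof cannot be substituted into the development here without short-circuiting it. Two smaller points: your route yields only $\epsilon'(\delta) = \exp(-\log^{O(1)}\delta^{-1})$, whereas the direct averaging argument gives the polynomially better $|A\cap Y| = \Omega(|Y|/K^{O(1)})$, i.e.\ $\epsilon'(\delta)=\delta^{O(1)}$ --- immaterial for the statement as written, which asks only for \emph{some} functions, but relevant in applications; and for that purely qualitative statement the classical Fourier-analytic Bogolyubov--Chang argument with dimension $K^{O(1)}$ would already suffice, so the heavy machinery you cite is needed only to reach the bounds (\ref{eqn.finalbounds}).
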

We should like to combine Theorems \ref{thm.polygrowi} and \ref{thm.combi} to get Theorem \ref{thm.frei}.  We can not do this directly but it turns out that by delving a little into the proofs of each one can combine them to do so, and in particular the ways we establish Theorem \ref{thm.combi} for given functions $\eta',\epsilon'$ and $d'$ lead to arguments for establishing Theorem \ref{thm.frei} with $\eta\approx\eta',\epsilon\approx\epsilon'$ and $d\approx d'$.

\section{Overview of the combinatorial obstacles}\label{sec.ov}

Our goal now is to prove Theorem \ref{thm.combi}, and in light of the equivalence mentioned in the previous section we shall think of cosets, convex coset progressions and sets with relative polynomial growth as being the same thing for the purpose of constructing examples.

One of the reasons that proving Theorem \ref{thm.combi} (and hence Theorem \ref{thm.frei}) is hard (and also one reason it is so powerful) is that there are qualitatively three different sorts of structure having large additive energy.  We got a sense of roughly what these are earlier but it is helpful now to record them a little more precisely.
\begin{enumerate}
\item\label{it.r} \emph{(Random sets)} Suppose that $H$ is a coset in $G$ and $A \subset H$ is chosen by including each $h \in H$ independently with probability $\delta$.  Then (with high probability) $E(A)\approx \delta |A|^3$.
\item\label{it.l} \emph{(Independent copies of the same coset)} Suppose $H$ is a coset in $G$ and $A$ is a union of $k\sim \delta^{-1}$ independent cosets in $G/H$.  To be clear this means that $A=\bigcup_{i=1}^k{(x_i+H)}$ where $\{x_i+H\}_{i=1}^k$ is a set of $k$ elements of $H$ such that
\begin{equation*}
n \in \Z^k \textrm{ and } n_1x_1+\dots+n_kx_k\in H \Rightarrow n_ix_i \in H \textrm{ for all } i \in \{1,\dots,k\}.
\end{equation*}
Then $E(A)\approx \delta |A|^3$. 
\item \emph{(Independent copies of different cosets)} Suppose that $k \sim \delta^{-1/2}$ and $H_1$, \dots, $H_k$ are `internally independent cosets' all of the same size which is to be taken to mean that
\begin{equation*}
|H_1+\dots+H_k|=|H_1|\dots |H_k|,
\end{equation*}
and intuitively means that there are no non-trivial relations between elements in the $H_i$s.  Then $E(A) \approx \delta |A|^3$.
\end{enumerate}
It is a little easier to unify the first two classes of example with each other than the third with either of the first two.  This is because in the first two classes there is an obvious choice of coset: $H$.  (In fact the obvious choice is really of subgroup, but this turns out not to be an important distinction here.)  On the other hand in the third class any of the cosets (or, rather, corresponding subgroups of) $H_1$, \dots, $H_k$ are reasonable choices and there is no particular reason to pick one over the other.

A unifying aspect of the first two classes above is that the sets $A$ given have small sumset.  In particular, in both classes we have that $|A+A| =\Theta(\delta^{-1}|A|)$ which we think of as saying that $A$ has `small doubling'; in the third class $|A+A| = \Omega(|A|^2)$ so that it is almost as large as can be.

The first step in proving Theorem \ref{thm.combi} is then in converting sets from the third class into the first two, and this is the purpose of the Balog-Szemer{\'e}di-Gowers Lemma.  Qualitatively this was proved by Balog and Szemer{\'e}di in \cite{balsze::}, but it was a very important step when Gowers established polynomial bounds in \cite{gow::4}.
\begin{theorem}[Balog-Szemer{\'e}di-Gowers Lemma]\label{thm.bsg}
Suppose that $A \subset G$ has $E(A) \geq \delta|A|^3$.  Then there is a set $A' \subset A$ such that
\begin{equation*}
|A'| \geq \delta^{O(1)}|A|  \textrm{ and } |A'+A'| \leq \delta^{-O(1)}|A'|.
\end{equation*}
\end{theorem}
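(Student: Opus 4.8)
The plan is to recast the hypothesis as a statement about a bipartite graph, establish a purely graph-theoretic lemma, and then read the conclusion off the graph; it is cleanest to produce a subset with small \emph{difference} set and convert to a sumset bound only at the very end. Write $r(x)$ for the number of pairs $(a,b)\in A\times A$ with $a-b=x$, so that $E(A)=\sum_x r(x)^2$ (this is the definition of $E(A)$ rewritten via $x+y=z+w\iff x-z=w-y$), while $\sum_x r(x)=|A|^2$ and $r(x)\le|A|$. The differences $x$ with $r(x)<\delta|A|/2$ contribute at most $\delta|A|^3/2$ to $E(A)$, so the set $P:=\{x: r(x)\ge\delta|A|/2\}$ of \emph{popular differences} satisfies $\sum_{x\in P}r(x)\ge\delta|A|^2/2$; that is, at least $\delta|A|^2/2$ pairs $(a,b)\in A\times A$ have $a-b\in P$. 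Form the bipartite graph $\Gamma$ between two copies of $A$, joining $a$ in the left copy to $b$ in the right copy precisely when $a-b\in P$; then $\Gamma$ has at least $\delta|A|^2/2$ edges, and each edge $(a,b)$ certifies that $a-b$ has at least $\delta|A|/2$ representations as a difference of two elements of $A$.

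The combinatorial heart of the matter is to find $A'\subseteq A$ with $|A'|\ge\delta^{O(1)}|A|$ such that all but a small absolute-constant proportion of ordered pairs $(a_1,a_2)\in A'\times A'$ have at least $\delta^{O(1)}|A|$ common neighbours in $\Gamma$. I would prove this by dependent random choice (in the spirit of Gowers' original argument). Pick $t$ vertices $b_1,\dots,b_t$ independently and uniformly from the right copy of $A$ and set $U:=N(b_1)\cap\cdots\cap N(b_t)$, a subset of the left copy. Counting paths of length two in $\Gamma$ and applying Cauchy--Schwarz to the right-hand degrees produces at least $\delta^{O(1)}|A|^3$ of them, so by convexity the average of $|U|^2$ over the choice of the $b_i$ is at least $\delta^{O(1)}|A|^2$; on the other hand, declaring a pair \emph{bad} if it has fewer than $\delta^{O(1)}|A|$ common neighbours (for a suitably small power of $\delta$), the average over the $b_i$ of the number of bad pairs inside $U$ is at most $2^{-t}$ times the lower bound just obtained. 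Hence the average of $|U|^2-2^{t-1}\cdot(\text{number of bad pairs in }U)$ is still at least $\delta^{O(1)}|A|^2$, so for some choice of the $b_i$ the set $U$ (which we relabel $A'$) has $|A'|\ge\delta^{O(1)}|A|$ and contains at most a $2^{1-t}$-fraction of bad pairs; taking $t$ to be a large enough absolute constant makes this fraction as small as we wish. The point is that $t=O(1)$ works here -- whereas eliminating \emph{every} bad pair would force $t$ to grow with $|A|$ and destroy the polynomial dependence on $\delta$ -- and this is precisely where Gowers improved on the original regularity-lemma argument of Balog and Szemer{\'e}di.

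Now read off the conclusion. If $b$ is a common neighbour of $a_1,a_2\in A'$ then $a_1-a_2=(a_1-b)-(a_2-b)$ with $a_1-b,a_2-b\in P$, and replacing each popular difference by one of its $\ge\delta|A|/2$ representations as a difference from $A$ writes $a_1-a_2$ as a signed sum $c-c'-d+d'$ of four elements of $A$. Since $b$ -- and hence the remaining data -- is determined by such a quadruple once $a_1$ has been fixed, a routine representation count gives every good pair $(a_1,a_2)$ at least $\delta^{O(1)}|A|^3$ such representations; as there are only $|A|^4$ quadruples altogether, the set $S$ of values arising this way has $|S|\le\delta^{-O(1)}|A|$, and every good pair $(a_1,a_2)$ has $a_1-a_2\in S$. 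A final Markov-type pruning then yields $A''\subseteq A'$ with $|A''|\ge\delta^{O(1)}|A|$ in which \emph{every} pair $a_1,a_2$ has at least $\delta^{O(1)}|A|$ mediators $a_3\in A'$ with $a_1-a_3\in S$ and $a_2-a_3\in S$; writing $a_1-a_2=(a_1-a_3)-(a_2-a_3)$ and expanding each term as above exhibits every element of $A''-A''$ as a signed sum of eight elements of $A$ in at least $\delta^{O(1)}|A|^7$ ways, whence $|A''-A''|\le\delta^{-O(1)}|A|\le\delta^{-O(1)}|A''|$. The Pl{\"u}nnecke--Ruzsa inequality finally converts this into $|A''+A''|\le\delta^{-O(1)}|A''|$, and $A''$ is the set demanded by the theorem.

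The one genuinely delicate point is the graph lemma with polynomial bounds: the naive arguments either pass through Szemer{\'e}di's regularity lemma, giving tower-type bounds, or iterate dependent random choice until no bad pair remains, which ruins the dependence on $\delta$; the way out is the parameter choice above, which succeeds only because a small constant proportion of bad pairs can be absorbed later, in the purely additive step. Everything else -- the dyadic pigeonholing, the representation counting, and the appeal to Pl{\"u}nnecke--Ruzsa -- is routine.
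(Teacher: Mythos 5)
The paper does not actually prove this lemma --- it explicitly defers to the literature, noting only that the argument revolves around examining $A\cap(x+A)$ for suitable randomly chosen $x$ --- and your proposal is a correct and complete sketch of exactly that standard Gowers-style proof: popular differences, the bipartite graph, dependent random choice with a constant number of samples tolerating a small constant fraction of bad pairs, representation counting to bound $|A''-A''|$, and Pl\"unnecke--Ruzsa to pass to the sumset. The delicate points (distinctness of the representing quadruples across different common neighbours $b$, and the final Markov pruning to guarantee many mediators for \emph{every} pair) are correctly identified and handled, so nothing further is needed.
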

This result has been studied extensively elsewhere and will not be our focus here.  The interested reader might like to consult the book \cite{taovu::} of Tao and Vu.  It is work saying that the proof is elementary, albeit rather clever, and is set around the idea of examining $A \cap (x+A)$ for suitable randomly chosen $x$.  In the third class of examples considered above this has the effect of selecting one of the cosets (at random).

From now on we shall be interested in the case of so-called `small doubling' mentioned earlier meaning the case when $|A+A| \leq K|A|$, and shall prove the following.
\begin{theorem}\label{thm.combi2}
Suppose that $G$ is an Abelian group and $A \subset G$ is such that $|A+A| \leq K|A|$.  Then there is a set $Y$ which is a translate of $X-X$ such that
\begin{equation*}
|A \cap Y| \geq\eta''(K) |A|\textrm{, }|A \cap Y| \geq \epsilon''(K) |Y|\textrm{ and } |nX| \leq n^{d''(K)}|X| \textrm{ for all }n \geq 1
\end{equation*}
for some (decreasing) functions $\eta'',\epsilon''$ and $d''$.
\end{theorem}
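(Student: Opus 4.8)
The plan is to reduce Theorem~\ref{thm.combi2} to a single hard ingredient, a \emph{quantitative Bogolyubov--Ruzsa lemma}, and then recover the two covering statements by routine sumset calculus. The ingredient I would aim to establish is: if $|A+A| \leq K|A|$ then the iterated sumset $2A-2A$ contains a set $X$ with relative polynomial growth of order $\log^{O(1)}K$ and with $|X| \geq \exp(-\log^{O(1)}K)|A|$. (Equivalently one may produce a coset progression $M \subseteq 2A-2A$ of dimension $\log^{O(1)}K$ and size $|M| \geq \exp(-\log^{O(1)}K)|A|$, and then invoke the easy implication ``$d$-dimensional coset progression $\Rightarrow$ relative polynomial growth of order $O(d)$'' noted before Theorem~\ref{thm.polygrowi}; dimension and polynomial-growth order are interchangeable up to constants, so either form of the ingredient serves.)

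Granting this, the rest is bookkeeping with the Pl\"{u}nnecke--Ruzsa and Ruzsa covering inequalities. Pl\"{u}nnecke--Ruzsa gives $|mA - nA| \leq K^{m+n}|A|$ for all $m,n \geq 0$; in particular $X \subseteq 2A-2A$ has $|X| \leq K^{4}|A|$, so $|A|$ and $|X|$ agree up to a factor $\exp(\log^{O(1)}K)$, while $A+X \subseteq 3A-2A$ has $|A+X| \leq K^{5}|A| \leq \exp(\log^{O(1)}K)|X|$. Ruzsa's covering lemma then yields a set $S$ with $|S| \leq \exp(\log^{O(1)}K)$ and $A \subseteq S + (X-X)$, and pigeonholing over $S$ produces a translate $Y$ of $X-X$ with $|A \cap Y| \geq |S|^{-1}|A| \geq \exp(-\log^{O(1)}K)|A|$; this is the first conclusion, with $\eta''(K) = \exp(-\log^{O(1)}K)$. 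For the second, polynomial growth of order $d$ gives $|X+X| \leq 2^{d}|X|$, hence $|Y| = |X-X| \leq \exp(\log^{O(1)}K)|X|$ by one more use of Pl\"{u}nnecke--Ruzsa, and combining with $|X| \leq K^{4}|A|$ and the lower bound on $|A \cap Y|$ gives $|A \cap Y| \geq \exp(-\log^{O(1)}K)|Y|$, so $\epsilon''(K) = \exp(-\log^{O(1)}K)$. The growth hypothesis on $X$ is precisely the third conclusion, with $d''(K) = \log^{O(1)}K$.

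So the entire difficulty is in producing $X$ inside $2A-2A$ with $|X|$ comparable to $|A|$ and dimension only $\log^{O(1)}K$. The qualitative version -- \emph{some} coset progression of bounded dimension -- is classical and Fourier-analytic: Bogolyubov's argument shows that $2A-2A$ contains the Bohr set cut out by the large Fourier coefficients of $1_A$, Parseval bounds their number, and a Bohr set of rank $k$ has relative polynomial growth of order $O(k)$; but this crude truncation only gives rank $K^{O(1)}$. The route to rank $\log^{O(1)}K$ -- the content of \cite{san::00}, and the step carrying essentially all of the recent quantitative progress on Theorem~\ref{thm.frei} -- is to replace the Fourier truncation by the Croot--Sisask $L^{p}$ almost-periodicity lemma, which constructs combinatorially, with size loss only $\exp(-O(p\log K))$ rather than the Fourier-analytic $K$, a large set $T$ on which a convolution such as $1_A \ast \mu_B$ is $L^{p}$-almost-invariant, and then to iterate this $O(\log K)$ times, at each stage refining away the unstructured part until a Bohr set of the required rank survives (and, if the coset-progression form is wanted, a Minkowski-type geometry-of-numbers argument converts it, exactly as in the algebraic half of Green--Ruzsa \cite{greruz::0} and Theorem~\ref{thm.polygrowi}). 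I expect the main obstacle to be controlling this iteration so that the rank grows only additively across the $O(\log K)$ steps -- keeping it at $\log^{O(1)}K$ rather than $K^{O(1)}$ or worse.
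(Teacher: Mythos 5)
Your proposal is correct in outline and rests on the same engine as the paper's argument --- Croot--Sisask almost-periodicity --- but it packages the deduction differently. You factor the theorem through a clean intermediate statement (a quantitative Bogolyubov--Ruzsa lemma: a large subset $X$ of $2A-2A$ with small relative polynomial growth) and then recover the translate $Y$ by Ruzsa's covering lemma plus pigeonholing. The paper instead extracts $Y$ directly from the almost-periodicity output: the set $X$ it produces satisfies $kX\subset 2A-2A$ for a large $k$ (which, via Pl\"unnecke's inequality and Chang's covering lemma, is exactly what gives the polynomial growth), and the estimate $\|1_A\ast 1_A\ast\mu_{X-X}\|_{\ell^2(G)}^2\geq |A|^3/4K$ yields a translate of $X-X$ on which $A$ has density $\Omega(K^{-O(1)})$. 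Your route is valid and arguably cleaner, but it loses in the second parameter: you get $\epsilon''(K)=\exp(-\log^{O(1)}K)$ where the paper gets $K^{-O(1)}$; this is irrelevant for Theorem~\ref{thm.combi2} as stated (the functions are unspecified) but matters for the quality of the rough equivalence discussed in the introduction. Two smaller remarks. First, since no particular bounds are demanded, you do not need the full strength of \cite{san::00}: the weaker set $X$ produced in the proof of Theorem~\ref{thm.combiweak} (size $\exp(-K^{1+o(1)})|A|$, growth order $K^{1+o(1)}$) already feeds your covering argument and proves the theorem. Second, your description of how the $\log^{O(1)}K$ ingredient is obtained --- iterating an almost-periodicity step $O(\log K)$ times and ``refining away the unstructured part'' --- is not quite how \cite{san::00} goes: the point of the $\ell^p$ version with $p\sim\log K$ and the L{\'o}pez--Ross observation is precisely that a \emph{single} application (combined with the $k$-fold triangle inequality and Chang's covering lemma) suffices; the iterative refinement you describe is closer in spirit to Schoen's argument in \S\ref{sec.schoen}.
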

This yields Theorem \ref{thm.combi} on combination with the Balog-Szemer{\'e}di-Gowers Lemma with
\begin{equation*}
\eta'(\delta) = \delta^{O(1)}\eta''(\delta^{-O(1)}), \epsilon'(\delta) = \delta^{O(1)}\epsilon''(\delta^{-{O(1)}}) \textrm{ and } d'(\delta) = d''(\delta^{-O(1)}).
\end{equation*}
In actual fact for the best bounds one also goes into the details of the proof of the Balog-Szemer{\'e}di-Gowers Lemma, but for this overview that improvement will not concern us.

The focus of the paper now is on proving Theorem \ref{thm.combi2} and we split into three sections.  In \S\ref{sec.basic} we establish Theorem \ref{thm.combi} with bounds corresponding roughly to the original work of Green and Ruzsa; in \S\ref{sec.schoen} we develop Schoen's improvement of this; and, finally, in \S\ref{sec.new} we develop the improvement leading to the bounds in (\ref{eqn.finalbounds}).

\section{Sumset estimates and polynomial growth}

In this brief section we record a couple of useful results which will help us to establish relative polynomial growth on all scales from a growth condition on just one scale.  In particular we have the following result of Chang \cite{cha::0}.
\begin{lemma}[Variant of Chang's covering lemma]\label{lem.ccl}
Suppose that $G$ is an Abelian group and $X \subset G$ is symmetric with $|(3k+1)X| < 2^k|X|$ for some $k$. Then $|nX| \leq n^k|X|$ for all $n \geq 1$.
\end{lemma}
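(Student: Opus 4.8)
The plan is to combine a pigeonhole step, which isolates a single scale at which $X$ barely grows, with a covering argument of the kind introduced by Chang that then propagates this information to every scale.

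First I would dispose of the small scales and locate the good one. We may assume $0\in X$, so that $(|nX|)_{n\ge 1}$ is non-decreasing. For $n\le 3k+1$ the conclusion is already forced by the hypothesis, since $|nX|\le|(3k+1)X|<2^k|X|\le n^k|X|$ for $n\ge 2$ and trivially for $n=1$, so only $n>3k+1$ remains. For the good scale, telescoping gives $|(3k+1)X|/|X|=\prod_{j=0}^{k-1}|(3j+4)X|/|(3j+1)X|<2^k$, so some factor lies below $2$; that is, there is $t=3j+1\le 3k-2$ with $|tX+3X|=|(t+3)X|<2|tX|$. Ruzsa's covering lemma applied to the pair $(tX,3X)$ then forces $3X$ into a single translate of $tX-tX=2tX$, and disjointness statements of this flavour are the mechanism by which slow growth at one scale begins to spread.

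The transfer to all $n$ runs through dissociativity. Call a set $D$ dissociated if $\sum_{d\in D}\epsilon_d d=0$ with $\epsilon_d\in\{-1,0,1\}$ forces every $\epsilon_d=0$, and take $D=\{d_1,\dots,d_\ell\}$ to be a maximal dissociated subset of $X$. Maximality gives $X\subseteq\{\sum_i\epsilon_i d_i:\epsilon_i\in\{-1,0,1\}\}$, hence $nX\subseteq\{\sum_i a_i d_i:a_i\in\Z,\ |a_i|\le n\}$, and therefore $|nX|\le(2n+1)^\ell$ for every $n\ge 1$. Everything now hinges on the bound $\ell\le k$, and this is precisely where the exponent $3k+1$ in the hypothesis is spent: the $2^\ell$ zero--one sums of $D$ are distinct (by dissociativity) and lie in $\ell X$, and one must amplify this into an estimate of the shape $2^\ell|X|\le|(3\ell+O(1))X|$ by disjointifying suitable translates (here the slow scale $t$ above, or the hypothesis directly, is brought in); then $\ell\ge k$ would yield $2^k|X|\le|(3k+1)X|$, contradicting the hypothesis. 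I expect this amplification --- the heart of Chang's covering argument --- to be the main obstacle. Once $\ell\le k$ is in hand, combining $|nX|\le(2n+1)^\ell$ for large $n$ with the bound already noted for $n\le 3k+1$ and checking that the constants land on exactly $n^k|X|$ is routine, though the range of moderate $n$ needs a little care.
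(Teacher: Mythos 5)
The paper offers no proof of this lemma (it is quoted from Chang's work), so I can only judge your outline on its own terms, and there is a genuine gap at exactly the point you flag as ``the main obstacle'': the bound $\ell\le k$ for a maximal dissociated subset is not merely unproven --- for the notion of dissociativity you have chosen it is \emph{false}. Take $X=\{-N,\dots,N\}\subset\Z$. This is symmetric and satisfies the hypothesis with $k=4$ (since $|(3k+1)X|\approx(6k+2)N$ while $2^k|X|\approx2^{k+1}N$), yet $X$ contains the dissociated set $\{1,2,4,\dots,2^{\lfloor\log_2 N\rfloor}\}$, so $\ell\ge\log_2 N\gg k$. Consequently the amplification $2^\ell|X|\le|(3\ell+O(1))X|$ you hope for cannot exist: absolute dissociativity only makes the $2^\ell$ subset sums distinct as \emph{points} of $\ell X$, giving $2^\ell\le|\ell X|$ with no factor of $|X|$, which is perfectly consistent with the example. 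To win the extra factor of $|X|$ one must use dissociativity \emph{relative to} $X$: choose $x_1,\dots,x_m\in X$ maximal so that the $2^m$ translates $\sum_{i\in S}x_i+X$, $S\subseteq\{1,\dots,m\}$, are pairwise disjoint. Then these translates sit inside $(m+1)X$, so $2^m|X|\le|(m+1)X|\le|(3m+1)X|$ and the hypothesis forces $m<k$; but the price is that maximality now only yields the weaker covering $X\subseteq\mathrm{Span}_{\pm}(x_1,\dots,x_m)+X-X$, with an additive error, so your clean inclusion $nX\subseteq\{\sum_i a_id_i:|a_i|\le n\}$ is lost and must be replaced by an induction on $n$ giving something like $nX\subseteq(n-2)\mathrm{Span}_{\pm}+2X$. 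In short, your plan borrows the size bound from one notion of dissociativity and the covering statement from the other; the two cannot be had simultaneously.

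Two lesser issues. The pigeonhole-plus-Ruzsa-covering step is a correct deduction but, as written, a dead end: $3X\subseteq t_0+2tX$ with $t$ possibly as large as $3k-2$ bounds a small sumset by a translate of a much \emph{larger} one, which is the wrong direction for controlling $|nX|$, and it never reconnects with the rest of your argument. And even after repairing the main step, the count one gets is of the shape $(2n+1)^{m}|2X|\le(2n)^{k-1}2^{k}|X|$, which only drops below $n^{k}|X|$ once $n\gtrsim 2^{2k}$; so the range $3k+1<n<2^{2k}$ needs genuinely more than ``a little care'' if one insists on the exponent $k$ exactly (for the purposes of this survey, which only tracks dimension up to constants, that last point is secondary --- the missing size bound is the real gap).
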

To establish the hypothesis of this lemma it will also be useful to have Pl{\"u}nnecke's inequality.
\begin{theorem}[Pl{\"u}nnecke's inequality]  Suppose that $|A+A| \leq K|A|$.  Then $|nA| \leq K^n|A|$ for all $n \geq 1$.
\end{theorem}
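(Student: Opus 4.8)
The plan is to follow Petridis's inductive argument, which sidesteps the graph theory of Plünnecke's original proof. We may assume $A$ is nonempty and finite. First I would fix a nonempty $X \subseteq A$ minimising the ratio $|X+A|/|X|$, writing $K'$ for this minimal value; since $A$ itself is a candidate, $K' \le |A+A|/|A| \le K$, and by minimality every nonempty $Z \subseteq A$ satisfies $|Z+A| \ge K'|Z|$. The target will follow if I can replace the hypothesis $|A+A|\le K|A|$ by the cleaner relative statement about $X$ and then push it through all scales.

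The heart of the matter is Petridis's lemma: for \emph{every} finite $C \subseteq G$ one has $|X + A + C| \le K'|X+C|$. I would prove this by induction on $|C|$, the case $|C| = 1$ being the defining property of $K'$. For the inductive step write $C = C' \cup \{c\}$ with $c \notin C'$ and set $D := \{x \in X : x + A + c \subseteq X + A + C'\}$. A short set-theoretic check gives $X + A + C = (X+A+C') \cup (X+A+c)$ with $D + A + c$ inside the intersection, so inclusion–exclusion yields $|X+A+C| \le |X+A+C'| + |X+A| - |D+A|$. Now $|X+A| = K'|X|$ and, by minimality, $|D+A| \ge K'|D|$, so $|X+A+C| \le |X+A+C'| + K'|X\setminus D|$. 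Finally the set $E := \{x \in X : x+c \in X+C'\}$ is contained in $D$ (if $x+c = x'+c'$ with $x' \in X$, $c' \in C'$, then $x+a+c = x'+a+c' \in X+A+C'$ for all $a\in A$), while $|X|-|E| = |X+C|-|X+C'|$ by the same inclusion–exclusion applied without the $A$; hence $|X\setminus D| \le |X+C|-|X+C'|$, and the induction hypothesis $|X+A+C'| \le K'|X+C'|$ closes the step.

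Granting the lemma, I would iterate it with $C = (n-1)A$ to obtain $|X + nA| \le (K')^n|X|$ for all $n \ge 1$. Since $X$ is nonempty, fixing any $x_0 \in X$ gives $x_0 + nA \subseteq X + nA$, whence
\[
|nA| \le |X + nA| \le (K')^n|X| \le K^n|X| \le K^n|A|,
\]
which is the assertion. The step I expect to be the main obstacle is the key lemma itself — specifically getting the bookkeeping of the inductive step right: the exact choice of $D$, the inclusion $E \subseteq D$ that is precisely what lets the minimality of $K'$ be invoked, and making the two inclusion–exclusion identities line up. Everything after that (the iteration and the final comparison $|nA| \le |X+nA|$) is routine. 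An alternative route would be Plünnecke's original argument via layered commutative graphs, Menger's theorem and the submultiplicativity of magnification ratios (or Ruzsa's streamlining of it), but the approach above is shorter and yields the same constant $K^n$.
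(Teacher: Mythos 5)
Your argument is correct and is precisely Petridis's proof, which is the one the paper itself points to (it states Pl\"unnecke's inequality without proof and cites Petridis's ``very direct'' argument as the recommended route). All the key steps check out: the inclusion $E\subseteq D$, the two inclusion--exclusion counts, and the final iteration with $C=(n-1)A$ followed by $|nA|\le|X+nA|$.
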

This result was proved by Pl{\"u}nnecke in \cite{plu::}, and his proof was rediscovered and popularised somewhat later by Ruzsa.  Very recently, however, Petridis \cite{pet::} found a new proof which is very direct and well worth reading.

Before closing this short section it is worth saying that the above two results are part of a rich family of sumset estimates we do not have time to touch on here, but we direct the interested reader towards Tao and Vu \cite{taovu::} for more details.

\section{The basic argument}\label{sec.basic}

We now turn our attention to proving Theorem \ref{thm.combi2} with bounds of the quality arrived at by Green and Ruzsa in \cite{greruz::0}.  We are thus considering a set $A$ with $|A+A| \leq K|A|$ and in light of our earlier discussions we can restrict ourselves to sets coming from the first two classes of structure in \S\ref{sec.ov}.  

Even with the work we have done the two classes of possible structure behave differently: in the first class when $A$ is chosen randomly from $H$, $A$ will typically have a lot of gaps so that $1_A$ is not very smooth.  One way of smoothing a function is by averaging or convolving and to this end we make a definition.

Given $f,g \in \ell^1(G)$ we define the \emph{convolution} of $f$ and $g$ to be the function
\begin{equation*}
f \ast g(x) = \sum_{y+z=x}{f(y)g(z)} \textrm{ for all }x \in G.
\end{equation*}
To relate this to sets we define some level sets called symmetry sets.  Given a set $A$ the \emph{symmetry set at threshold $\eta$} is defined to be the set
\begin{equation*}
\Sym_{\eta}(A):=\{x \in G:1_A \ast 1_{-A}(x) \geq \eta |A|\}.
\end{equation*}
Note that $1_A \ast 1_{-A}(x) \leq |A|$ so that $\Sym_\eta(A)$ is the set of points where $1_A \ast 1_{-A}$ is a proportion $\eta$ of its maximum.

Heuristically we expect $1_A \ast 1_A$ to be pretty smooth -- on a qualitative level $1_A$ is an element of $L^2$ and the convolution of two $L^2$ functions is continuous.  Concretely then we expect to have a quantitative version of the notion of uniform continuity, meaning there should be a (large) set $X$ such that on translation by elements of $X$, the convolution does not vary very much.  To formulate this precisely we define translation: given $f \in \ell^2(G)$ we write
\begin{equation*}
\tau_x(f)(y) = f(y+x) \textrm{ for all }x,y \in G.
\end{equation*}
It will also be helpful to write $\mu_A$ for the function $1_A/|A|$ so that $f \ast \mu_A(x)$ is the average value of $f$ over the set $x-A$.  The heuristic above can be made precise in a number of ways but one very powerful approach is due to Croot and Sisask \cite{crosis::} who proved the following lemma.
\begin{lemma}[Croot-Sisask Lemma]  Suppose that $G$ is an Abelian group, $f \in \ell^2(G)$ and $|A+A| \leq K|A|$.  Then there is a set $X$ with $|X| \geq (2K)^{-O(\epsilon^{-2})}|A|$ such that
\begin{equation*}
\|\tau_x(f \ast \mu_A) - f \ast \mu_A\|_{\ell^2(G)} \leq \epsilon \|f\|_{\ell^2(G)} \textrm{ for all } x \in X.
\end{equation*}
\end{lemma}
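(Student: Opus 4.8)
The plan is to replace $f \ast \mu_A$ by a short \emph{random} average over $A$, and then to use the small doubling of $A$ to find many pairs of such averages that happen to be translates of one another.

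First I would fix $k = O(\epsilon^{-2})$ and, for a tuple $\vec a = (a_1,\dots,a_k) \in A^k$, set $g_{\vec a}(y) := \tfrac1k\sum_{i=1}^{k} f(y - a_i)$, so that when $\vec a$ is drawn uniformly at random from $A^k$ we have $\mathbf{E}_{\vec a}\, g_{\vec a} = f \ast \mu_A$. For each fixed $y$ the quantity $g_{\vec a}(y) - f\ast\mu_A(y)$ is an average of $k$ independent mean-zero random variables, so a second-moment computation (discarding cross terms by independence, and using $\|\mu_A\|_{\ell^1(G)} = 1$) gives
\begin{equation*}
\mathbf{E}_{\vec a}\, \|g_{\vec a} - f \ast \mu_A\|_{\ell^2(G)}^{2} \leq \frac1k \sum_{y \in G} (|f|^2 \ast \mu_A)(y) \leq \frac1k \|f\|_{\ell^2(G)}^{2}.
\end{equation*}
Taking $k$ to be a sufficiently large multiple of $\epsilon^{-2}$ and applying Markov's inequality, at least half of the tuples $\vec a \in A^k$ are \emph{good}, meaning that $\|g_{\vec a} - f \ast \mu_A\|_{\ell^2(G)} \leq \tfrac{\epsilon}{2}\|f\|_{\ell^2(G)}$; write $T \subset A^k$ for the set of good tuples, so $|T| \geq \tfrac12|A|^k$.

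The mechanism driving the proof is that if $\vec a$ and $\vec a'$ both lie in $T$ and their difference $\vec a' - \vec a = (t,\dots,t)$ is a constant vector, then $g_{\vec a'}$ is exactly a translate of $g_{\vec a}$; since translation is an isometry of $\ell^2(G)$, the triangle inequality then shows that $f \ast \mu_A$ lies within $\epsilon\|f\|_{\ell^2(G)}$ of its translates by $\pm t$. So the task reduces to exhibiting a large set of $t \in G$ each of which occurs as the common difference $\vec a' - \vec a = (t,\dots,t)$ of two tuples of $T$. To this end I would partition $A^k$ into the classes of the equivalence relation ``$\vec a \sim \vec a'$ if and only if $\vec a' - \vec a$ is a constant vector''; a class is completely determined by the vector $(a_2 - a_1, \dots, a_k - a_1) \in (A-A)^{k-1}$, so the number of classes is at most $|A - A|^{k-1} \leq K^{O(k)}|A|^{k-1}$ by Pl{\"u}nnecke's inequality. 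Since $|T| \geq \tfrac12|A|^k$, some class, say $\{\vec v + (x,\dots,x) : x \in S\}$ for some $S \subset G$, contains at least $|A|^k / \bigl(2K^{O(k)}|A|^{k-1}\bigr) \geq (2K)^{-O(k)}|A|$ tuples of $T$; then the symmetric set $X := S - S$ consists entirely of valid translations and
\begin{equation*}
|X| \geq |S| \geq (2K)^{-O(k)}|A| = (2K)^{-O(\epsilon^{-2})}|A| .
\end{equation*}

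The only real idea is the last step: partitioning $A^k$ along cosets of the diagonal subgroup $\{(t,\dots,t) : t \in G\}$ is precisely the device that converts ``a positive proportion of all $k$-tuples give a good sample'' into ``a positive proportion of all translations $t$ arise as a common difference of two good samples'', and the small-doubling hypothesis enters exactly to keep the number of classes under control. The remaining steps are routine — a second-moment estimate and the triangle inequality. A minor point of bookkeeping is to carry the extra factor $\tfrac12$ through the definition of ``good'' (enlarging $k$ by the corresponding constant factor) so that the final bound comes out as $\epsilon\|f\|_{\ell^2(G)}$ rather than a fixed multiple of it; this only affects the constant hidden in the exponent $O(\epsilon^{-2})$.
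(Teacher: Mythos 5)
Your proposal is correct and is essentially the argument the paper sketches: random sampling of $k = O(\epsilon^{-2})$ points of $A$ with a second-moment bound to produce a large set of good tuples, followed by the observation that two good tuples differing by a constant vector yield a valid translation, with your pigeonhole over cosets of the diagonal subgroup being exactly the paper's ``averaging'' step that intersects $L-L$ with the diagonal. The only nitpick is notational: the set $S$ in your final step should index the \emph{good} tuples within the popular class (not the whole class) before forming $X = S - S$, and the bound $|A-A| \leq K^2|A|$ you need for counting classes comes from the Ruzsa triangle inequality rather than the stated form of Pl\"unnecke; neither affects the argument.
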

\begin{proof}[Sketch proof]
  The basic idea is to start with the equality
\begin{equation*}
f \ast \mu_A = \E_{a \in A}{\tau_{-a}(f)}.
\end{equation*}
Given this we can randomly sample from $A$, say $k$ times, and get a good approximation to $f \ast \mu_A$:
\begin{equation*}
f \ast \mu_A \approx \frac{1}{k}\sum_{i=1}^k{\tau_{-z_i}(f)}
\end{equation*}
for $z_1,\dots,z_k$ chosen uniformly at random from $A$.  Here `$\approx$' means approximately equal in $\ell^2$-norm, and the larger the value of $k$, the better the approximation.

We now examine the set $L$ of vectors $(z_i)_{i=1}^k$ such that the approximation is good.  By averaging we prove that $L-L$ has a large intersection, call it $X$, with the diagonal set $\{(a,\dots,a):a \in A\}$.  On the other hand, if $x \in X$ then it follows that there is some $z \in L$ such that
\begin{equation*}
f \ast \mu_A \approx \frac{1}{k}\sum_{i=1}^k{\tau_{-z_i-x}(f)} \textrm{ and } f \ast \mu_A \approx \frac{1}{k}\sum_{i=1}^k{\tau_{-z_i}(f)}
\end{equation*}
and hence
\begin{equation*}
\tau_x(f \ast \mu_A) \approx f \ast \mu_A \textrm{ for all } x \in X.
\end{equation*}
Working through the details of this sketch gives the proof.
\end{proof}

Given the above result we shall now prove the following which is the version of Theorem \ref{thm.combi2} corresponding to the bounds of Green and Ruzsa although the argument is somewhat different.
\begin{theorem}\label{thm.combiweak}
Suppose that $G$ is an Abelian group and $A \subset G$ is such that $|A+A| \leq K|A|$.  Then there is a set $Y$ which is a translate of $X-X$ such that
\begin{equation*}
|A \cap Y| \geq \exp(-K^{1+o(1)}) |A| \textrm{, }|A \cap Y| = \Omega(|Y|/K)
\end{equation*}
and
\begin{equation*}
 |nX| \leq n^{K^{1+o(1)}}|X| \textrm{ for all }n \geq 1.
\end{equation*}
\end{theorem}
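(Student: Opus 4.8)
The plan is to use the Croot--Sisask Lemma to pull out of $A$ a large ``approximate subgroup'' $X$, to show that $X$ has relative polynomial growth of order $K^{1+o(1)}$ by way of Chang's covering lemma (Lemma~\ref{lem.ccl}), and then to capture a dense piece of $A$ inside a translate of $X-X$ by a covering argument. The first move is to smooth: the obstructive examples in \S\ref{sec.ov} are the ``random'' ones, whose indicator is far from smooth, so instead of $1_A$ one works with the autocorrelation $g:=1_A\ast\mu_{-A}$. This is a non-negative function with $g(0)=\|g\|_\infty=1$, $\|g\|_1=|A|$, and, applying Cauchy--Schwarz on the support $A+A$ and using $|A+A|\le K|A|$, $\|g\|_{\ell^2}^2=E(A)/|A|^2\ge|A|/K$. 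Apply the Croot--Sisask Lemma to $f=1_A$ (working with $-A$ in place of $A$, which has the same doubling, so that $\mu_{-A}$ appears) with a parameter $\epsilon$ to be chosen: this yields a symmetric set $X_0\ni 0$ with $|X_0|\ge(2K)^{-O(\epsilon^{-2})}|A|$ such that $\|\tau_x g-g\|_{\ell^2}\le\epsilon|A|^{1/2}$ for all $x\in X_0$.

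Telescoping gives $\|\tau_x g-g\|_{\ell^2}\le m\epsilon|A|^{1/2}$ for $x\in mX_0$, whence $\langle\tau_x g,g\rangle=g\ast\tilde g(x)\ge\|g\|_{\ell^2}^2-m\epsilon|A|^{1/2}\|g\|_{\ell^2}\ge\|g\|_{\ell^2}^2(1-m\epsilon K^{1/2})$ (using $|A|^{1/2}\le K^{1/2}\|g\|_{\ell^2}$). Hence, provided $m\epsilon K^{1/2}\le\tfrac12$, the set $mX_0$ lies in $\{x:g\ast\tilde g(x)\ge\tfrac12\|g\|_{\ell^2}^2\}$, a subset of $2A-2A$ of size at most $2K|A|$ (since $\sum_x g\ast\tilde g(x)=|A|^2$). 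This exhibits $X_0$ as an approximate subgroup, but a single application of Croot--Sisask is not enough to produce the hypothesis $|(3k+1)X|<2^k|X|$ of Lemma~\ref{lem.ccl} with useful $k$: because $|X_0|$ is exponentially small in $K$, the ratio $|mX_0|/|X_0|$ is already of size $\exp(K^{1+o(1)})$ and never drops below $2^m$ over the range of $m$ one controls. One must instead \emph{iterate} the Croot--Sisask Lemma -- passing through a sequence of successively more rigid sets built from the almost-period sets produced so far -- to arrive at a symmetric $X\ni 0$, still of size $\exp(-K^{1+o(1)})|A|$, which genuinely behaves like a Bohr set in the sense that $|(3k+1)X|<2^k|X|$ holds for some $k=K^{1+o(1)}$. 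Feeding this into Lemma~\ref{lem.ccl} gives $|nX|\le n^{K^{1+o(1)}}|X|$ for all $n\ge1$. The choice of $\epsilon$ of order roughly $K^{-1/2}$ -- the threshold forced by $\|g\|_{\ell^2}^2\ge|A|/K$ -- is exactly what makes $(2K)^{-O(\epsilon^{-2})}=\exp(-K^{1+o(1)})$, and hence $|X|\ge\exp(-K^{1+o(1)})|A|$. This iteration is the crux of the whole argument.

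For the covering step one bounds $|A+X|$ by $K^{O(1)}|A|$ using Pl{\"u}nnecke's inequality (together with the Ruzsa triangle inequality and $X\subseteq2A-2A$ up to translation), so that $|A+X|\le\exp(K^{1+o(1)})|X|$; a Ruzsa-type covering argument -- pigeonholing translates of $X$ over $A$ -- then shows $A$ is contained in at most $\exp(K^{1+o(1)})$ translates of $X-X$. One of these translates, $Y$, therefore satisfies $|A\cap Y|\ge\exp(-K^{1+o(1)})|A|$, which is the first required bound. The density $|A\cap Y|=\Omega(|Y|/K)$ is then obtained by keeping track through the construction of the sizes of $X$ and of $X-X=|Y|$ relative to $|A|$ (the relevant point being that $X$ sits, after translation, inside $A-A$ and is not too small), and the three output quantities $\eta''(K)^{-1}$, $\epsilon''(K)^{-1}$, $d''(K)$ all emerge respectively as $\exp(K^{1+o(1)})$, $\exp(K^{1+o(1)})$, $K^{1+o(1)}$ from the single choice of $\epsilon$.

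The step I expect to be the main obstacle is the second one: converting the $\ell^2$, single-scale almost-invariance delivered by one application of the Croot--Sisask Lemma into polynomial-growth control on all scales, without throwing away the saving. The naive route -- triangle inequality inside $mX_0$ followed directly by Chang's covering lemma -- simply does not close; it is the iteration of Croot--Sisask, together with a careful propagation of the $\epsilon$-dependence through Pl{\"u}nnecke's inequality and Lemma~\ref{lem.ccl}, that is responsible for the $K^{1+o(1)}$ exponents.
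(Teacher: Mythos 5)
Your setup (a single Croot--Sisask application with $\epsilon$ of order $1/\sqrt{K}$ up to logarithms, then Chang's covering lemma plus Pl\"unnecke) matches the paper's, and you correctly isolate the main difficulty: within the range $m \le 1/(2\epsilon\sqrt{K})$ of scales directly controlled by almost-periodicity, the ratio $|mX_0|/|X_0|$ is already $\exp(K^{1+o(1)})$, so Chang's hypothesis $|(3k+1)X|<2^k|X|$ cannot be verified there. But your proposed fix --- ``iterate the Croot--Sisask Lemma'' --- is left entirely unspecified, and it is not what is needed; since this is exactly the step you yourself flag as the crux, this is a genuine gap. The paper closes the argument with a \emph{single} application of Croot--Sisask by decoupling the two scales. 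Almost-periodicity (with $\epsilon = 1/2k\sqrt{K}$) gives the \emph{containment} $kX \subset 2A-2A$ for $k$ in the controlled range, and containments iterate for free: $klX \subset l(2A-2A)$, so Pl\"unnecke applied to $A$ gives $|klX| \le K^{4l}|A| \le K^{4l}(2K)^{O(k^2K)}|X|$ for \emph{every} $l$, far beyond the directly controlled range. One then checks Chang's hypothesis at the large scale $3r+1 = kl$ with $l = k^2K$: the requirement $K^{4l}(2K)^{O(k^2K)} < 2^{kl/3}$ reduces to $k \gg \log K$, so one may take $k = O(\log K)$, $r = O(K\log^3K)$, and $|X| \ge (2K)^{-O(k^2K)}|A| = \exp(-K^{1+o(1)})|A|$. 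That is the entire quantitative content of the theorem, and no second application of Croot--Sisask occurs.

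A second, smaller issue: your route to the density bound (Ruzsa covering for $|A\cap Y|\ge\exp(-K^{1+o(1)})|A|$, then ``keeping track of sizes'' for $|A\cap Y|=\Omega(|Y|/K)$) does not obviously produce both bounds for the \emph{same} translate $Y$; the covering argument alone only guarantees density $\exp(-K^{1+o(1)})$ in the chosen translate. The paper instead records that the almost-periodicity together with $\|1_A\ast 1_A\|_{\ell^2(G)}^2\ge|A|^3/K$ forces $\|1_A\ast 1_A\ast\mu_{X-X}\|_{\ell^2(G)}^2 \ge |A|^3/4K$, and a single averaging argument then yields one translate $Y$ of $X-X$ with $|A\cap Y| = \Omega(|Y|/K)$; since $|Y|\ge|X|\ge\exp(-K^{1+o(1)})|A|$, the same $Y$ automatically satisfies the first bound as well.
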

\begin{proof}[Sketch proof]
We let $k$ be a natural number to be optimised later and apply the Croot-Sisask lemma with $\epsilon = 1/2k\sqrt{K}$ to get a set $X$ with $|X| \geq |A|/(2K)^{O(k^2K)}$ such that
\begin{equation*}
\|\tau_x(1_A \ast 1_A) - 1_A \ast 1_A\|_{\ell^2(G)}^2 \leq |A|^3/4K \textrm{ for all } x \in kX
\end{equation*}
by the triangle inequality.  Now, by Cauchy-Schwarz we have $\|1_A \ast 1_A\|_{\ell^2(G)}^2 \geq |A|^3/K$ which by the triangle inequality and the output of the Croot-Sisask lemma tells us two things:
\begin{equation*}
kX \subset 2A-2A \textrm{ and }\|1_A \ast 1_A\ast \mu_{X-X} \|_{\ell^2(G)}^2 \geq |A|^3/4K.
\end{equation*}
The second of these gives us the translate $Y$ of $X-X$ such that $|A \cap Y| = \Omega(|Y|/K)$ via an averaging argument; the first let us control the degree of polynomial growth of $X$.

Since $kX \subset 2A-2A$ we have by Pl{\"u}nnecke's inequality that
\begin{equation*}
|klX| \leq K^{4l}|A| \leq K^{4l}(2K)^{O(k^2K)}|X|.
\end{equation*}
Putting $3r+1=kl$ and $l=k^2K$ we get that
\begin{equation*}
|(3r+1)X| \leq K^{O(r^{2/3}K^{1/3})}|X|;
\end{equation*}
it follows that we can take $r=O(K\log^3K)$ such that $|(3r+1)X| < 2^r|X|$.  Chang's covering lemma then tell us that $X$ has the right order of relative polynomial growth.

Finally from the definition of $r$ and $l$ in terms of $k$ we get that $k=O(\log K)$ from which the bound in the size of $|A\cap Y|/|A|$ follows.
\end{proof}

\section{Schoen's refinement}\label{sec.schoen}

Schoen in \cite{sch::1} made a major breakthrough when he proved the bounds mentioned in (\ref{eqn.sch}).  If we study the argument above the weakness was that we had to take $\epsilon \approx 1/\sqrt{K}$ in our application of the Croot-Sisask lemma, and since the resultant set $X$ has size exponentially dependent on $\epsilon^{-2}$ this lead to exponential losses in $K$.

To some extent these loses are necessary as can be seen by considering the examples in class (\ref{it.r}) of \S\ref{sec.ov}.  In this class $A$ is chosen randomly with probability $1/K$ from a coset $H$, so that (with high probability)
\begin{equation*}
1_A \ast 1_A(x) \approx |A|/K \textrm{ for all }x \in A-A.
\end{equation*}
On the other hand $A+A$ is very structured -- it is the whole coset $H$.  Similarly, in class (\ref{it.l}) of \S\ref{sec.ov}, $A+A$ is again very structured.

It follows that in either of the above cases $1_{A+A} \ast 1_{A+A}$ takes rather large values; certainly much larger than those of $1_A \ast 1_A$ in the case when $A$ is chosen randomly.  If we can guarantee that some convolution takes a lot of values much larger than its average value then the arguments at the end of the last section can be applied much more effectively. 

This is roughly speaking Schoen's idea and the following is one of the key ingredients from \cite{sch::1}.
\begin{proposition}\label{prop.intit}
Suppose that $G$ is an Abelian group, $A$ is a finite subset of $G$ with $|A+A| \leq K|A|$, and $\epsilon \in (0,1]$ is a parameter. Then there is a non-empty set $A'\subset A$ such that
\begin{equation*}
|\Sym_{K^{-\eta}}(A'+A)| \geq \exp(-\exp(O(\eta^{-1}))\log K)|A|.
\end{equation*}
\end{proposition}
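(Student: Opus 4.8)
Write $\eta$ for the parameter and abbreviate $C=A'+A$; the goal is to produce a small $A'\subseteq A$ for which $1_C\ast 1_{-C}$ is close to fully concentrated, i.e.\ takes a value at least $K^{-\eta}|C|$ on a set of size at least $\exp(-\exp(O(\eta^{-1}))\log K)|A|$. I would first dispose of the range where $\eta$ exceeds an absolute constant. For any finite set $D$ the function $1_D\ast 1_{-D}$ has $\ell^1(G)$-norm $|D|^2$, $\ell^\infty(G)$-norm at most $|D|$, and support inside $D-D$, whence $\|1_D\ast 1_{-D}\|_{\ell^2(G)}^2\geq |D|^4/|D-D|$; a second-moment argument on its level sets then produces a threshold $\nu=\Omega(|D|/|D-D|)$ with $|\Sym_\nu(D)|=\Omega(|D|^2/|D-D|)$. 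Applying this with $D=A+A$ and bounding $|(A+A)-(A+A)|=|4A|\leq K^4|A|$ by Pl{\"u}nnecke's inequality already gives the conclusion, with room to spare, whenever $\eta=\Omega(1)$. So the whole difficulty lies in small $\eta$: pushing the threshold up from $K^{-O(1)}$ to the near-invariant scale $K^{-\eta}$.

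For that I would run an iteration of length $m=O(\eta^{-1})$. Starting from $A_0=A$, at stage $i$ I would pass from $A_i$ to a smaller $A_{i+1}\subseteq A_i$ chosen so that the symmetry set of $A_{i+1}+A$ is controlled at a strictly better threshold than that of $A_i+A$ -- concretely, one pigeonholes a suitable translation parameter $x_i$, replaces $A_i$ by the Balog-Szemer{\'e}di-type refinement $A_i\cap(x_i+A_i)$ (equivalently one may run the step through a Croot-Sisask translation-invariant set, as in \S\ref{sec.basic}), and uses the Pl{\"u}nnecke-Ruzsa calculus to keep the relevant sumsets and doubling constants under control throughout. The crucial quantitative feature is that a single such step improves the threshold exponent from $\sim 1/i$ to $\sim 1/(i+1)$ while the density $|A_{i+1}|/|A|$ of the working set is only \emph{squared} relative to that of $A_i$. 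Hence after $m=O(\eta^{-1})$ stages the threshold exponent has fallen to $O(\eta)$ while the density has dropped only to $(K^{-O(1)})^{2^{O(\eta^{-1})}}=\exp(-\exp(O(\eta^{-1}))\log K)$; taking $A'=A_m$ then finishes the argument.

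The step I expect to be the main obstacle is precisely this per-stage estimate: showing that the intersection (or Croot-Sisask) move genuinely upgrades the symmetry threshold of $A_i+A$ by the amount claimed, and that the price is no worse than a squaring of the density. This is where one must engineer the choice of $x_i$ so that $A_{i+1}+A$ is trapped inside enough translates of $A_i+A$ to force $1_{A_{i+1}+A}\ast 1_{-(A_{i+1}+A)}$ to be large on a set that is still a fixed proportion of $A_{i+1}+A$; a careless choice instead drives the Croot-Sisask tolerance down to $K^{-1/2}$ and reintroduces the exponential loss of \S\ref{sec.basic}, which is exactly the defect Schoen's argument is built to avoid. Once that single-step inequality is established, the remaining ingredients -- the second-moment pigeonholing, the sumset estimates, and the optimisation of $m$ against $\eta$ -- are routine.
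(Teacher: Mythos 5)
You have the right skeleton --- iterated intersections $A_{i+1}=A_i\cap(x_i+A_i)$, a doubling constant that squares at each stage, $O(\eta^{-1})$ stages, and hence a cumulative density loss of $\exp(-\exp(O(\eta^{-1}))\log K)$ --- and your disposal of the regime $\eta=\Omega(1)$ by a second-moment argument on $1_{A+A}\ast 1_{-(A+A)}$ together with Pl\"unnecke is fine. But the step you defer as ``the main obstacle'' is the entire content of the proposition, and the progress measure you propose for the iteration is not the one that makes it work. The engine is the Katz--Koester inclusion
\begin{equation*}
(A+A_i)\cap\bigl(x+A+A_i\bigr)\supseteq A+\bigl(A_i\cap(x+A_i)\bigr),
\end{equation*}
which converts a lower bound on $|A+(A_i\cap(x+A_i))|$ directly into a lower bound on $1_{A+A_i}\ast 1_{-(A+A_i)}(x)$. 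One then runs a stopping-time argument on the single monovariant $M_i:=|A+A_i|/|A|$, not on a gradually improving threshold exponent. Let $L_i$ be the current doubling constant of $A_i$ and let $S_i:=\{x:1_{A_i}\ast 1_{A_i}(x)\geq |A_i|/2L_i\}$; a mass-counting argument gives $|S_i|\geq |A_i|/2$. Either $|A+(A_i\cap(x+A_i))|\geq K^{-\eta}|A+A_i|$ for \emph{every} $x\in S_i$, in which case $S_i\subseteq\Sym_{K^{-\eta}}(A_i+A)$ and you stop with $A'=A_i$; or some $x\in S_i$ yields $M_{i+1}\leq K^{-\eta}M_i$, together with $L_{i+1}\leq 2L_i^2$ and $|A_{i+1}|\geq |A_i|/2L_i$. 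Since $M_0\leq K$ and $M_i\geq 1$ always, the second alternative can occur at most $\eta^{-1}$ times, and this is what forces termination; the cumulative loss $\prod_i(2L_i)^{-1}$ is then $\exp(-\exp(O(\eta^{-1}))\log K)$ as required.

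Your per-step claim --- that a single intersection step upgrades the threshold exponent from $\sim 1/i$ to $\sim 1/(i+1)$ --- is not something the move delivers, and I do not see how to prove it; the threshold $K^{-\eta}$ is fixed throughout and the dichotomy above replaces any gradual improvement. Relatedly, invoking Croot--Sisask inside this iteration is a red herring: Proposition \ref{prop.intit} is the \emph{input} to the Croot--Sisask stage of \S\ref{sec.schoen}, not an application of it, and routing the step through a translation-invariant set would reintroduce exactly the $\epsilon\approx K^{-1/2}$ loss you are trying to avoid. With the inclusion and the stopping rule above in place, the rest of your outline (the second-moment estimate for $|S_i|$, the Pl\"unnecke bookkeeping, and the choice $m=O(\eta^{-1})$) goes through as you describe.
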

The proof of this is iterative and based around an important observation which seems to have been first made by Katz and Koester in \cite{katkoe::}.

Suppose that $A'' \subset G$ is such that $|A+A''| \leq M|A|$ and $|A'' + A''| \leq L|A''|$.  Then we have
\begin{equation*}
1_{A+A''} \ast 1_{-(A+A'')}(x) = |(A+A'') \cap (x+A+A'')| \geq |A + (A'' \cap (x+A''))|.
\end{equation*}
Writing $S$ for the set of $x$ such that $A'' \cap (x+A'')$ is large, that is
\begin{equation*}
S:=\{x \in G: 1_{A''} \ast 1_{A''}(x) \geq |A''|/2L\},
\end{equation*}
we have two possibilities:
\begin{enumerate}
\item either $1_{A+A''} \ast 1_{-(A+A'')}(x) \geq R|A+A''|$ for all $x \in S$;
\item or, putting $A''':=A''\cap (x+A'')$, we have
\begin{equation*}
|A'''+A'''| \leq 2L^2|A'''|\textrm{ and }|A+A'''| \leq (M/R)|A|.
\end{equation*}
\end{enumerate}
Given this we proceed by downward induction on $|A+A''|/|A|$ terminating when we are in the first case and repeating with $A''$ replaced by $A'''$, $M$ by $M/R$ and $L$ by $2L^2$ in the second.  This yields the proposition.

With the above result we can now prove the following.
\begin{theorem}\label{thm.combisch}
Suppose that $G$ is an Abelian group and $A \subset G$ is such that $|A+A| \leq K|A|$.  Then there is a set $Y$ which is a translate of $X-X$ such that
\begin{equation*}
|A \cap Y| \geq \exp(-\exp(O(\sqrt{\log K}))) |A| \textrm{, }|A \cap Y| =\Omega(|Y|/K^{O(1)})
\end{equation*}
and
\begin{equation*}
 |nX| \leq n^{\exp(O(\sqrt{\log K})}|X| \textrm{ for all }n \geq 1.
\end{equation*}
\end{theorem}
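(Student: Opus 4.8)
The plan is to rerun the Croot-Sisask argument behind Theorem~\ref{thm.combiweak}, but applied to $1_{A'+A}$ rather than $1_A$, where $A'\subset A$ is the set supplied by Proposition~\ref{prop.intit}. Fix a parameter $\eta\in(0,1]$, to be chosen at the end. By Proposition~\ref{prop.intit} there is a non-empty $A'\subset A$ such that the symmetry set $S:=\Sym_{K^{-\eta}}(A'+A)$ has $|S|\geq\exp(-\exp(O(\eta^{-1}))\log K)|A|$, and by definition $1_{A'+A}\ast 1_{-(A'+A)}\geq K^{-\eta}|A'+A|$ on $S$. Since $A'\subset A$ and $|A+A|\leq K|A|$, Pl\"unnecke's inequality shows $A'+A$ has doubling $K^{O(1)}$. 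The point, which is Schoen's, is that whereas for the structured sets of \S\ref{sec.ov} the convolution $1_A\ast 1_A$ is everywhere only about a $K^{-1}$ fraction of its maximum value, here $1_{A'+A}\ast 1_{-(A'+A)}$ is at least a $K^{-\eta}$ fraction of its maximum on the whole of the large set $S$; thus the role played in the basic argument by the Cauchy-Schwarz bound $\|1_A\ast 1_A\|_{\ell^2}^2\geq|A|^3/K$, which forced $\epsilon$ down to order $K^{-1/2}$ there, will now be played by a bound coming from $S$ that only forces $\epsilon$ down to order $K^{-\eta}$ (times a $1/k$ from telescoping).

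Concretely, apply the Croot-Sisask lemma to $f=1_{A'+A}$ with small-doubling set $A$ and $\epsilon$ of order $K^{-\eta}/k$, where $k$ is a number of scales to be optimised, to get a symmetric set $X-X$ with $|X|\geq(2K)^{-O(\epsilon^{-2})}|A|=\exp(-O(k^2K^{2\eta}\log K))|A|$ such that, after telescoping over $k$ scales, $\|\tau_x(1_{A'+A}\ast\mu_A)-1_{A'+A}\ast\mu_A\|_{\ell^2(G)}$ is small for all $x\in kX$. Combined with the lower bound from $S$, this yields, just as in the proof of Theorem~\ref{thm.combiweak}, two things: (i) $kX$ lies in a bounded sumset of $A$, say $kX\subset cA-cA$ for an absolute constant $c$; and (ii) a translate $Y$ of $X-X$ has $|A\cap Y|=\Omega(|Y|/K^{O(1)})$, via an averaging argument over the $\ell^1$, $\ell^2$ and $\ell^\infty$ sizes of $1_{A'+A}\ast 1_{-(A'+A)}\ast\mu_{X-X}$ together with the containment $A'\subset A$. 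Since $|Y|\geq|X|\geq\exp(-O(k^2K^{2\eta}\log K))|A|$, part (ii) also gives $|A\cap Y|\geq\exp(-O(k^2K^{2\eta}\log K))|A|$.

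For the polynomial growth, (i) and Pl\"unnecke give $|klX|\leq K^{O(l)}|A|\leq K^{O(l)}(2K)^{O(\epsilon^{-2})}|X|$, and taking $l\sim k^2K^{2\eta}$ balances the two factors to give $|klX|\leq\exp(O(k^2K^{2\eta}\log K))|X|$; putting $3r+1=kl=k^3K^{2\eta}$ this reads $|(3r+1)X|\leq\exp(O(r\log K/k))|X|$, so for $k$ a large enough multiple of $\log K$ we get $|(3r+1)X|<2^r|X|$, whereupon Lemma~\ref{lem.ccl} shows $X$ has relative polynomial growth of order $r=O(k^3K^{2\eta})=O(K^{2\eta}\log^3 K)$. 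It remains to optimise $\eta$: the two losses in play are the Croot-Sisask loss $\exp(-O(K^{2\eta}\log^3 K))$ and Proposition~\ref{prop.intit}'s loss $\exp(-\exp(O(\eta^{-1}))\log K)$, and taking $\eta$ of order $1/\sqrt{\log K}$ makes $K^{2\eta}=\exp(O(\sqrt{\log K}))$ and $\exp(O(\eta^{-1}))=\exp(O(\sqrt{\log K}))$; since $\log K$ and $\log^3 K$ are both $\exp(o(\sqrt{\log K}))$, the degree of growth and $\log(|A|/|A\cap Y|)$ both come out as $\exp(O(\sqrt{\log K}))$, as claimed.

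The main obstacle — and the real difference from Theorem~\ref{thm.combiweak} — is making steps (i) and (ii) precise: one must show that a large symmetry set for $1_{A'+A}$, in place of a mere $\ell^2$ lower bound for $1_A\ast 1_A$, is enough to force $kX$ into a bounded sumset of $A$ and to produce a translate of $X-X$ meeting $A$ in a positive proportion both of $A$ itself and of $Y$. Getting the intersection with $A$, rather than just with $A'+A$, is where the interplay of $A$, $A'$ and $S$ has to be handled with care. Everything downstream — Pl\"unnecke, Chang's covering lemma, and the choice of $\eta$ and $k$ — is a rerun of the basic argument with $K^{-\eta}$ replacing $K^{-1/2}$.
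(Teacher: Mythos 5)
Your overall architecture is right, but there is one substantive error: you apply the Croot--Sisask lemma to $f=1_{A'+A}$ with $A$ as the small-doubling (averaging) set, i.e.\ you seek almost-invariance of $1_{A'+A}\ast\mu_A$. The paper applies it with the symmetry set $S=\Sym_{K^{-\eta}}(A'+A)$ as the averaging set, and this is not a cosmetic choice: the whole point of Proposition \ref{prop.intit} is that the lower bound it supplies is a pointwise lower bound on $1_{A'+A}\ast 1_{-(A'+A)}$ \emph{on $S$}, which by Cauchy--Schwarz converts into $\|1_{A'+A}\ast\mu_S\|_{\ell^2(G)}\geq K^{-\eta}\|1_{A'+A}\|_{\ell^2(G)}$ --- a statement about convolution with $\mu_S$, not with $\mu_A$. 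It is this bound that licenses taking $\epsilon\sim K^{-\eta}/k$. For your choice of averaging set there is no such gain: in the examples of class (\ref{it.l}) of \S\ref{sec.ov} (a union of $K$ independent translates of a coset $H$) one computes $\|1_{A'+A}\ast\mu_A\|_{\ell^2(G)}^2\approx K^{-1}\|1_{A'+A}\|_{\ell^2(G)}^2$, so with $\epsilon\sim K^{-\eta}/k\gg K^{-1/2}$ the error term $\epsilon\|1_{A'+A}\|_{\ell^2(G)}$ exceeds $\|1_{A'+A}\ast\mu_A\|_{\ell^2(G)}$ itself and the conclusion of Croot--Sisask is swamped: it gives neither $kX\subset cA-cA$ nor a dense translate $Y$. (In these examples the correct smoothing set is $H$; Proposition \ref{prop.intit} finds it via $S$, whereas $A$ itself does not smooth $1_{A'+A}$ at level better than $K^{-1/2}$.) So the step you flag as ``the main obstacle'' is not merely delicate --- as you have set it up it fails.

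Once you convolve with $\mu_S$ instead, the rest of your argument goes through essentially as in the paper, with one piece of bookkeeping you have omitted: the Croot--Sisask loss is now governed by the doubling of $S$ rather than of $A$, and since all one knows is $S\subset 2A-2A$ together with the lower bound on $|S|$, this doubling is only $\exp(\exp(O(\eta^{-1}))\log K)$. The size of $X$ and the growth exponent therefore carry an extra factor $\exp(O(\eta^{-1}))$, giving $|(3r+1)X|\leq(2K)^{O(r^{2/3}K^{O(\eta)}\exp(O(\eta^{-1})))}|X|$ rather than your $\exp(O(r\log K/k))|X|$. This does not change the final answer: at $\eta\sim 1/\sqrt{\log K}$ both $K^{O(\eta)}$ and $\exp(O(\eta^{-1}))$ are $\exp(O(\sqrt{\log K}))$, and your optimisation of $k$ and $r$ then matches the paper's.
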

\begin{proof}[Sketch proof]
We apply Proposition \ref{prop.intit} and put $S:= \Sym_{K^{-\eta}}(A'+A)$ so that
\begin{equation*}
|S| \geq  \exp(-\exp(O(\eta^{-1}))\log K)|A|.
\end{equation*}
By definition and the Cauchy-Schwarz inequality we have that
\begin{equation*}
\|1_{A+A'} \ast 1_S\|_{\ell^2(G)} \geq K^{-2\eta}|A+A'||S|^2,
\end{equation*}
and (since $S \subset 2A-2A$) that
\begin{equation*}
|S+S| \leq \exp(\exp(O(\eta^{-1}))\log K)|S|.
\end{equation*}
We then proceed as in the proof of Theorem \ref{thm.combiweak} but this time apply Croot-Sisask to the function $1_{A+A'}$ and the set $S$ with parameter $\epsilon = 1/2kK^{-\eta}$ and get a set $X$ with
\begin{equation*}
|(3r+1)X| \leq (2K)^{O(r^{2/3}K^{O(\eta)}\exp(O(\eta^{-1})))}|X|.
\end{equation*}
Optimising for $\eta$ we take $\eta = 1/\sqrt{\log K}$, and then the argument proceeds much as before to give the result.
\end{proof}

\section{The L{\'o}pez-Ross trick and generalised Croot-Sisask}\label{sec.new}

The Croot-Sisask lemma has a rather powerful generalisation to $\ell^p$-norms.
\begin{lemma}[Croot-Sisask lemma, $\ell^p$-norm version]  Suppose that $G$ is an Abelian group, $f \in \ell^p(G)$ and $|A+A| \leq K|A|$.  Then there is a set $X$ with $|X| \geq (2K)^{-O(\epsilon^{-2}p)}|A|$ such that
\begin{equation*}
\|\tau_x(f \ast \mu_A) - f \ast \mu_A\|_{\ell^p(G)} \leq \epsilon \|f\|_{\ell^p(G)} \textrm{ for all } x \in X.
\end{equation*}
\end{lemma}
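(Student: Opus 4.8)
The plan is to follow the probabilistic sampling argument of the original Croot--Sisask lemma, but replace every use of the $\ell^2$ triangle inequality and the corresponding averaging step by their $\ell^p$ analogues, the key analytic input being a Marcinkiewicz--Zygmund-type inequality controlling the $p$-th moment of a sum of independent mean-zero random vectors in $\ell^p(G)$. First I would record the identity $f \ast \mu_A = \E_{a \in A} \tau_{-a}(f)$ exactly as before, and then, for $z_1,\dots,z_k$ chosen independently and uniformly from $A$, consider the random function $F_z := \frac{1}{k}\sum_{i=1}^k \tau_{-z_i}(f)$. The quantity to control is $\E_z \|F_z - f \ast \mu_A\|_{\ell^p(G)}^p$; expanding pointwise and using that the summands $\tau_{-z_i}(f)(y) - (f \ast \mu_A)(y)$ are i.i.d.\ with mean zero, the Marcinkiewicz--Zygmund inequality gives, for each fixed $y$, a bound of the shape $k^{-p/2} p^{O(p)}$ times the $p$-th moment of a single summand; summing over $y \in G$ and taking $p$-th roots yields $\E_z \|F_z - f \ast \mu_A\|_{\ell^p(G)} \ll (p/k)^{1/2}\|f\|_{\ell^p(G)}$, which is precisely the place the factor $p$ enters the final bound.

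Next I would fix $k \asymp p/\epsilon^2$ so that the above expectation is at most, say, $\frac{\epsilon}{2}\|f\|_{\ell^p(G)}$, and let $L \subset A^k$ be the set of sample vectors $z$ for which $\|F_z - f \ast \mu_A\|_{\ell^p(G)} \le \epsilon\|f\|_{\ell^p(G)}$; by Markov's inequality $|L| \ge \frac{1}{2}|A|^k$. The pigeonhole step is then identical to the $\ell^2$ case: since $L$ occupies at least half of $A^k$, for a proportion at least $\frac{1}{2}|A|^{-1}$ (after a suitable averaging over translates of the diagonal) of $x \in A$ there exist $z, z' \in L$ with $z' = z + (x,\dots,x)$; more cleanly, one shows that $X := \{x : z, z+(x,\dots,x) \in L \text{ for some } z\}$ — the intersection of $L - L$ with the diagonal copy of $A$ — has size $|X| \ge \frac{|A|}{(2K)^{O(k)}}$ using Pl\"unnecke / Ruzsa covering applied to the small-doubling hypothesis $|A+A| \le K|A|$ exactly as in the $\ell^2$ proof; substituting $k \asymp \epsilon^{-2}p$ gives $|X| \ge (2K)^{-O(\epsilon^{-2}p)}|A|$.

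Finally, for $x \in X$ pick $z \in L$ with $z + (x,\dots,x) \in L$; then $\tau_x(F_z) = \frac{1}{k}\sum_i \tau_{-(z_i + x)}(f) = F_{z + (x,\dots,x)}$, so both $F_z$ and $\tau_x(F_z)$ lie within $\epsilon\|f\|_{\ell^p(G)}$ of $f \ast \mu_A$ (using translation-invariance of the $\ell^p$-norm for the latter), and the triangle inequality gives $\|\tau_x(f \ast \mu_A) - f \ast \mu_A\|_{\ell^p(G)} = \|\tau_x(F_z) - F_z\|_{\ell^p(G)} + O(\epsilon\|f\|_{\ell^p(G)})$; rescaling $\epsilon$ by a constant yields the stated bound. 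The main obstacle is the first paragraph: one needs the Marcinkiewicz--Zygmund constant for the $\ell^p$ space to grow no worse than $p^{O(p)}$ (equivalently $O(\sqrt p)^p$), so that the dependence of $k$ on $p$ is linear rather than exponential; this is where some care is required, since a cavalier application of Minkowski's or Rosenthal's inequality could inflate the constant. Everything downstream — the pigeonhole, the covering-lemma estimate on $|X|$, and the final triangle-inequality assembly — is formally identical to the $p = 2$ case and requires no new idea.
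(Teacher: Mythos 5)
Your proposal is correct and is essentially the argument the paper points to: run the $\ell^2$ Croot--Sisask sampling and pigeonhole argument verbatim, replacing the Khintchine/orthogonality step by the Marcinkiewicz--Zygmund inequality, whose $O(\sqrt{p})^{p}$ constant is precisely what makes $k \asymp \epsilon^{-2}p$ suffice and hence yields the $(2K)^{-O(\epsilon^{-2}p)}$ lower bound on $|X|$. You have correctly identified the one genuinely delicate point (keeping the moment constant to $p^{O(p)}$ rather than something worse), so there is nothing to add.
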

This result is also from \cite{crosis::} and the proof of the $\ell^2$ version except that Khintchine's inequality has to be replaced by the Marcinkiewicz-Zygmund inequality.  

The reason that this result is so much more powerful than the $\ell^2$ version of the Croot-Sisask lemma is in the bounds.  In particular the $p$ dependence is exponential in $p$, rather than doubly exponential which is what all previous arguments had given.  To understand why it is useful here we now sketch the proof of the following.
\begin{theorem}\label{thm.combifinal}
Suppose that $G$ is an Abelian group and $A \subset G$ is such that $|A+A| \leq K|A|$.  Then there is a set $Y$ which is a translate of $X-X$ such that
\begin{equation*}
|A \cap Y| \geq \exp(-\log^{O(1)}K) |A|\textrm{, }|A \cap Y| =\Omega(|Y|/K^{O(1)})
\end{equation*}
and
\begin{equation*}
 |nX| \leq n^{\log^{O(1)}K}|X| \textrm{ for all }n \geq 1.
\end{equation*}
\end{theorem}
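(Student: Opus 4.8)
The plan is to run the same three-step template as in the proofs of Theorems~\ref{thm.combiweak} and~\ref{thm.combisch} — a Katz-Koester/Schoen iteration producing a large symmetry set $S$, a Croot-Sisask smoothing of a convolution supported near $S$, and then an averaging step producing $Y$ together with Pl{\"u}nnecke's inequality and Chang's covering lemma (Lemma~\ref{lem.ccl}) to control the relative polynomial growth of $X$ — but to carry out the smoothing step with the $\ell^p$-norm version of the Croot-Sisask lemma instead of the $\ell^2$ version. Since the cost of the $\ell^p$ lemma, $(2K)^{O(\epsilon^{-2}p)}$, is only \emph{polynomial} in $p$, we will be able to take the accuracy $\epsilon$ to be an absolute constant rather than the negative power of $K$ forced on us before; the device that makes a constant $\epsilon$ sufficient is the L{\'o}pez-Ross trick.

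In detail I would proceed as follows. We are already reduced to $|A+A| \le K|A|$ by the Balog-Szemer{\'e}di-Gowers Lemma. First, apply Proposition~\ref{prop.intit} with $\eta$ a small absolute constant, giving a non-empty $A' \subset A$ such that, writing $B := A'+A$, the set $S := \Sym_{K^{-\eta}}(B)$ satisfies $|S| \ge K^{-O(1)}|A|$; note $|B| \ge |A|$, $|B+B| \le K^{O(1)}|B|$, $|S+S| \le K^{O(1)}|S|$ (as $S \subset 2A-2A$), and that the proof of Theorem~\ref{thm.combisch} supplies the estimate $\|1_B \ast 1_S\|_{\ell^2(G)}^2 \ge K^{-O(\eta)}|B||S|^2$. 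Second — this is the new ingredient — invoke the L{\'o}pez-Ross trick, an $L^p$-improving argument in the spirit of L{\'o}pez and Ross, to upgrade this $\ell^2$ bound into the assertion that for every $p \ge C\log K$ the function $1_B \ast \mu_S$ carries at least a constant proportion of its $\ell^p$-mass on the set where it exceeds half its supremum. Third, apply the $\ell^p$ Croot-Sisask lemma to $f = 1_B$ and the set $S$ with $p \asymp \log K$ and a small absolute constant $\epsilon$: this produces $X$ with $|X| \ge (2K^{O(1)})^{-O(\epsilon^{-2}p)}|S| = K^{-O(p)}|A| = \exp(-\log^{O(1)}K)|A|$ and $\|\tau_x(1_B \ast \mu_S) - 1_B \ast \mu_S\|_{\ell^p(G)} \le \epsilon\|1_B\|_{\ell^p(G)}$ for all $x \in X$, hence the same bound with $k\epsilon$ in place of $\epsilon$ for $x \in kX$. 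Now combine this with the L{\'o}pez-Ross concentration statement — precisely what lets the constant $\epsilon$ beat the (now constant) threshold — to conclude, on the one hand, $kX \subset 8A-8A$, and on the other, by the averaging argument of Theorem~\ref{thm.combiweak} applied to $1_B \ast 1_S \ast \mu_{X-X}$, a translate $Y$ of $X-X$ with $|(A'+A) \cap Y| = \Omega(|Y|/K^{O(1)})$, whence $|A \cap Y| = \Omega(|Y|/K^{O(1)})$ after a Ruzsa covering argument. Finally, from $kX \subset 8A-8A$ and Pl{\"u}nnecke's inequality, $|klX| \le K^{O(l)}|A| \le K^{O(l)+O(\epsilon^{-2}p)}|X|$; choosing $k$ and $l$ to be suitable fixed powers of $\log K$ so that $3r+1 := kl$ satisfies $|(3r+1)X| < 2^r|X|$ with $r = \log^{O(1)}K$, Chang's covering lemma gives $|nX| \le n^{\log^{O(1)}K}|X|$ for all $n \ge 1$, as required.

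I expect the main obstacle to be the L{\'o}pez-Ross step. In Schoen's $\ell^2$ argument $\epsilon$ had to beat $K^{-\eta}$ because the relative $\ell^2$-mass of the peak of $1_B \ast \mu_S$ is only of order $K^{-\eta}$; what must be shown is that on passing to $\ell^p$ the relevant threshold becomes essentially the $p$-th root of this quantity, of order $K^{-O(\eta)/p}$, so that once $p$ is a sufficiently large multiple of $\eta^{-1}\log K$ — which, $\eta$ being constant, simply means $p \asymp \log K$ — it is bounded below by an absolute constant and a constant $\epsilon$ will do, at the sole cost of the harmless factor $p$ inside $(2K)^{O(\epsilon^{-2}p)}$. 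Proving this $L^p$-improving/concentration estimate for $1_B \ast 1_S$, powered by the fact that $S$ is a symmetry set of $B$, is the delicate technical core; the purpose of the Katz-Koester/Schoen iteration preceding it is exactly to guarantee in advance that the relevant ``flatness'' of the convolution is at least $K^{-O(1)}$, so that taking $p \asymp \log K$ (and no larger) suffices. Granting this, the remaining verification — that $\epsilon^{-2}p$, the growth order $r$, and $-\log(|X|/|A|)$ all come out polylogarithmic in $K$ for the chosen parameters — is routine, being the familiar Pl{\"u}nnecke-plus-Chang bookkeeping already met in \S\ref{sec.basic} and \S\ref{sec.schoen}.
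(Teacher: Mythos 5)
Your overall architecture is right --- the $\ell^p$ Croot--Sisask lemma applied with $p \asymp \log K$ and an absolute-constant accuracy $\epsilon$, followed by the usual Pl\"unnecke-plus-Chang bookkeeping --- but the one step that actually makes a constant $\epsilon$ suffice is precisely the step you do not carry out. You identify ``proving this $L^p$-improving/concentration estimate for $1_B \ast 1_S$'' as the delicate technical core and then grant it; that step \emph{is} the L\'opez--Ross trick, so as written the argument has a genuine gap at its centre. Moreover the route you sketch for filling it (showing that $1_B \ast \mu_S$ carries a constant fraction of its $\ell^p$-mass above half its supremum, powered by the Katz--Koester/Schoen iteration) is not what is needed and is considerably harder than the real mechanism.

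The actual trick is an elementary duality observation, and it makes Proposition \ref{prop.intit} unnecessary here. One pairs against the indicator of the sumset: $\langle 1_{A+A}, 1_A \ast 1_A\rangle = |A|^2$ \emph{exactly}, because $1_A \ast 1_A$ is supported on $A+A$ and has total mass $|A|^2$. Writing $\langle \tau_x(1_{A+A}), 1_A\ast 1_A\rangle - \langle 1_{A+A}, 1_A \ast 1_A\rangle = \langle \tau_x(1_{A+A}\ast 1_{-A}) - 1_{A+A}\ast 1_{-A}, 1_A\rangle$ and applying H\"older together with the Croot--Sisask conclusion for $f = 1_{A+A}$, the error is at most $\epsilon |A|\,\|1_{A+A}\|_{\ell^p(G)}\|1_A\|_{\ell^{p'}(G)} \leq \epsilon K^{1/p}|A|^2$, which is at most $|A|^2/2$ once $p \sim \log K$ and $\epsilon$ is a suitable constant. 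So one never needs to locate the $\ell^p$-mass of any convolution: the quantity to be preserved is the exact value of a fixed inner product, not a lower bound on the peak of $1_B\ast\mu_S$, and that is why the $K^{-\eta}$-type thresholds --- and with them the need for the Schoen iteration --- disappear. From $\langle \tau_x(1_{A+A}), 1_A\ast 1_A\rangle \geq |A|^2/2$ for all $x \in kX$ one reads off $kX \subset 2A-2A$ and runs the averaging argument for $Y$ directly, after which your Pl\"unnecke/Chang endgame is exactly the paper's.
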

\begin{proof}[Sketch proof]
Rather than examining the $\ell^2$-norm of the convolution of two functions we use an observation of L{\'o}pez and Ross from \cite{lopros::}:
\begin{equation*}
\langle 1_{A+A}, 1_A \ast 1_A \rangle = |A|^2.
\end{equation*}
On the other hand if we know that
\begin{equation*}
\|\tau_x(1_{A+A} \ast 1_A) - 1_{A+A} \ast 1_A\|_{\ell^p(G)} \leq \epsilon \|1_{A+A}\|_{\ell^p(G)},
\end{equation*}
then we conclude that
\begin{equation*}
\langle \tau_x(1_{A+A}), 1_A \ast 1_A \rangle \geq  |A|^2 - \epsilon \|1_{A+A}\|_{\ell^p(G)}|A| = |A|^2(1-\epsilon K^{1/p}).
\end{equation*}
We conclude that we can take $p \sim \log K$ and $\epsilon = \Omega(1)$ such that
\begin{equation*}
\langle \tau_x(1_{A+A}), 1_A \ast 1_A \rangle \geq  |A|^2/2.
\end{equation*}
But this means by the Croot-Sisask lemma that there is a set $X$ of size at least $|A|K^{-O(k^2)}$ such that
\begin{equation*}
\langle \tau_x(1_{A+A}), 1_A \ast 1_A \rangle \geq  |A|^2/2 \textrm{ for all }x \in kX.
\end{equation*}
This can then be plugged back into a similar argument to the ones we had before to get Theorem \ref{thm.combifinal}.

The advantage here is that the set $X$ we have found is a lot bigger than those we had previously found as a result of the good bounds in the Croot-Sisask lemma.
\end{proof}

\section{Polynomial growth and convex progressions}\label{sec.asymfrei}

In this section we shall sketch a proof of Theorem \ref{thm.polygrowi} which we restate now as a reminder.
\begin{theorem}[{Theorem \ref{thm.polygrowi}}]
Suppose that $G$ is an Abelian group and $X \subset G$ is such that $ |nX| \leq n^d|X|$ for all $n \geq 1$.  Then there is a (centred) convex coset progression $M$ in $G$ such that
\begin{equation*}
X-X \subset M\textrm{, } |M| \leq \exp(O(d\log d))|X| \textrm{ and } \dim M =O(d\log d).
\end{equation*}
\end{theorem}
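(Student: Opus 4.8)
The plan is to follow the two-part strategy of Green and Ruzsa \cite{greruz::0}: first peel off a finite subgroup $H$ absorbing the ``torsion'' of $X$, and then analyse the remaining, essentially torsion-free, part in the quotient $G/H$ by the geometry of numbers. I would begin by normalising so that $0 \in X$ and $G = \langle X\rangle$, which is finitely generated, hence $G \cong \Z^r \times T$ with $T$ finite. Writing $S := X - X$ we have
\begin{equation*}
|nS| \leq |2nX| \leq (2n)^d |X| \leq (2n)^d |S|,
\end{equation*}
so $S$ is symmetric, contains $0$, generates $G$, and has relative polynomial growth of order $O(d)$ \emph{on every scale}; in particular $|S+S| \leq 4^d|S|$, $|S| \leq 4^d|X|$, and Pl{\"u}nnecke-type estimates are available throughout. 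The target is a centred convex coset progression $M = H + P$ with $H$ a finite subgroup, $\dim P = O(d\log d)$, $X - X \subseteq M$ and $|M| \leq \exp(O(d\log d))|X|$; note that $H$ is forced to be trivial when $G$ is torsion-free and $P$ trivial when $G$ has bounded exponent, so neither ingredient can be dispensed with in general.

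\emph{Step 1: the torsion subgroup.} I would construct $H$ iteratively, starting from $\{0\}$ and, while there remains an element $x \in 2S$ of small order (at most a threshold that is a fixed power of $d$) outside the current subgroup, adjoining it to the generators. Adjoining $x$ replaces $X + H$ by $X + H + \langle x\rangle$, which is contained in $(X+H) + (\mathrm{threshold})\cdot(2S)$; a Ruzsa-covering bookkeeping, driven by the relative polynomial growth of $S$, is used to show both that the process terminates and that $|X + H| \leq \exp(O(d\log d))|X|$ at the end. The outcome should be arranged so that in the quotient $\pi : G \to G/H$ the image $\pi(X)$ still has relative polynomial growth of order $O(d)$ and the group $G/H$ carries no torsion of order below the threshold, so that $\pi(X)$ and its bounded sumsets may be treated exactly as if they lived in a torsion-free group.

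\emph{Step 2: geometry of numbers in the quotient.} For the purpose of bounded sumsets I would now regard $\pi(S) = \pi(X) - \pi(X)$, a symmetric set, as sitting inside a lattice $\Lambda$ in some $\R^{r'}$, and use relative polynomial growth together with a covering argument in the spirit of Lemma~\ref{lem.ccl} -- greedily selecting directions and paying a logarithmic factor at each step -- to reduce to the case $r' = O(d\log d)$. On the symmetric convex body $B := \mathrm{conv}(\pi(S))$ I would then invoke Minkowski's second theorem: a basis of $\Lambda$ adapted to the successive minima of $B$ exhibits $B$, and hence $\pi(S)$, inside a centred box progression $P_0$ of dimension $r'$, and the growth of $\pi(S)$ bounds $\mathrm{vol}(B)$, which with Minkowski's theorem gives $|P_0| \leq \exp(O(d\log d))|\pi(S)|$. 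It is precisely the covering step that produces the extra logarithmic factor over the truth; this is also where growth on \emph{all} scales, as opposed to mere doubling, is essential, since doubling alone would only yield a dimension exponential in $d$.

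\emph{Step 3: assembling $M$.} Lifting $P_0$ through $\pi$ to a centred convex progression $P$ in $G$ and setting $M := H + P$, one has for $x,y \in X$ that $\pi(x - y) \in \pi(S) \subseteq \pi(P)$ and hence $x - y \in P + H = M$, so $X - X \subseteq M$; also $\dim M = \dim P = O(d\log d)$ and, using $|S + H| = |H|\,|\pi(S)|$,
\begin{equation*}
|M| = |H|\,|P| \leq |H|\exp(O(d\log d))|\pi(S)| = \exp(O(d\log d))\,|S + H| \leq \exp(O(d\log d))\,|X|,
\end{equation*}
the last step following from the guarantees of Step 1 (since $S + H$ equals $(X+H) - (X+H)$, to which Pl{\"u}nnecke's inequality applies). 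The main obstacle, I expect, is Step 1: fixing the order-threshold and running the extraction so that $|X + H|$ stays at most $\exp(O(d\log d))|X|$ while genuinely killing all the torsion that obstructs the geometry-of-numbers argument. Each individual tool -- Ruzsa covering, Pl{\"u}nnecke's inequality, Minkowski's second theorem, the Chang-type covering of Lemma~\ref{lem.ccl} -- is standard, but arranging them so that the accumulated loss is only $\exp(O(d\log d))$ in size and $O(d\log d)$ in dimension, uniformly over the ambient abelian group, is the delicate rearrangement at the heart of the proof.
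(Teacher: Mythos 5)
Your architecture is genuinely different from the one sketched in \S\ref{sec.asymfrei}, which never decomposes $G$ into torsion and free parts: there the subgroup component and the progression component of $M$ emerge simultaneously from a single object, the Bohr set of the large spectrum of a dilate of $X$ (Propositions \ref{prop.lowerbound}--\ref{prop.coset}), and the passage to a lattice is effected by the Ruzsa embedding $R_\Gamma$ restricted to that Bohr set. Unfortunately your route has a genuine gap, and it is not the Step 1 bookkeeping you flag but the premise underlying Steps 1 and 2: the subgroup part $H$ of a convex coset progression is not detected by element orders. Take $G=\Z/p\Z$ with $p$ a huge prime and $X=G$. Then $|nX|=|X|$ for all $n$, so the hypothesis holds with any $d$, and the theorem is witnessed by $M=G$ itself, a $0$-dimensional convex coset progression. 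But every non-zero element of $G$ has order $p$, far above any threshold $d^{O(1)}$, so your Step 1 outputs $H=\{0\}$, and Step 2 then asks you to ``regard $\pi(S)=\Z/p\Z$ as sitting inside a lattice $\Lambda$ in some $\R^{r'}$'' --- which is impossible: a finite subgroup has doubling constant $1$, and a subset of a torsion-free group with $|C+C|=|C|$ is a singleton, so no Fre{\u\i}man $2$-isomorphism onto a lattice subset exists. The same obstruction arises whenever $X$ correlates with a coset of a subgroup of large exponent, which is one of the basic classes of examples in \S\ref{sec.ov}. Thus the sentence ``the group $G/H$ carries no torsion of order below the threshold, so that $\pi(X)$ and its bounded sumsets may be treated exactly as if they lived in a torsion-free group'' is false, and it is precisely the content you have assumed away.

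The missing bridge from a general Abelian group to a lattice is the heart of the proof. In the paper it is supplied by harmonic analysis: one first shows $X-X\subset\Bohr(\Gamma,\delta)$ for $\Gamma=\LSpec(lX,\epsilon)$ (Proposition \ref{prop.lowerbound}), bounds the size of this Bohr set by a spectral argument (Proposition \ref{prop.LSpecproperties}), and only then applies geometry of numbers --- Minkowski's second theorem and Chang's covering lemma, exactly the tools you invoke --- to the image of the Bohr set under $R_\Gamma:x\mapsto(\tfrac{1}{2\pi}\arg\gamma(x))_{\gamma\in\Gamma}$, which is a Fre{\u\i}man morphism on the Bohr set and whose kernel $\{x:\gamma(x)=1\textrm{ for all }\gamma\in\Gamma\}$ is where the (possibly large-exponent) subgroup component of $M$ actually comes from. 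Your Steps 2 and 3 are sound once one is honestly inside a lattice, and your remark that growth on all scales, rather than mere doubling, is what keeps the dimension at $O(d\log d)$ is correct; but without a modeling device of the above kind the reduction to the lattice case does not go through, and no threshold on element orders will produce one.
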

As indicated this is largely a rearrangement of the ideas of Green and Ruzsa in \cite{greruz::0}, which are themselves developed from the hugely influential paper \cite{ruz::9} of Ruzsa.

One of the key tools is the Fourier transform which in this case we define regarding $G$ as a discrete group.  We write $\widehat{G}$ for the compact Abelian group of characters on $G$ and given $f \in \ell^1(G)$ the \emph{Fourier transform} of $f$ is defined to be the function
\begin{equation*}
\widehat{f}:\widehat{G} \rightarrow \C; \gamma\mapsto \sum_{x \in G}{f(x)\overline{\gamma(x)}}.
\end{equation*}
There is a useful notion of approximate annihilator on $G$ called Bohr sets.  Given $\Gamma \subset \widehat{G}$ a compact set and $\delta \in (0,2]$ we write
\begin{equation*}
\Bohr(\Gamma,\delta):=\{x \in G: |\gamma(x)-1| \leq \delta \textrm{ for all } \gamma \in \Gamma\}.
\end{equation*}
Bohr sets interact particularly well with the large spectrum of a set.  Given $A \subset G$ we write
\begin{equation*}
\LSpec(A,\epsilon):=\{\gamma \in \widehat{G}: \|1 - \gamma\|_{L^2(\mu_A \ast \mu_{-A})} \leq \epsilon\}.
\end{equation*}
The basic idea is to show that if $X$ has polynomial growth then $X-X$ is contained in the Bohr set of the large spectrum of (a dilate of) $X$.  We then show that this Bohr set is not too large, and finally that it is actually a low dimensional convex coset progression.

The following proposition deals with the first objective above; it is only slightly more general than \cite[Proposition 4.39]{taovu::}. 
\begin{proposition}\label{prop.lowerbound}
Suppose that $X\subset G$, $l$ is a positive integer such that $|lX| \leq K|(l-1)X|$ and
$\epsilon \in (0,1]$ is a parameter. Then
\begin{equation*}
X-X \subset \Bohr(\LSpec(lX,\epsilon),2\epsilon\sqrt{2K}).
\end{equation*}
\end{proposition}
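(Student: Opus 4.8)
The plan is to strip away the Bohr-set and large-spectrum definitions, reduce to the assertion that a character in the large spectrum of $lX$ is nearly constant on $X$, and then prove that by averaging, exploiting the decomposition $lX = X + (l-1)X$. Throughout, fix $t \in X - X$, write $t = x_1 - x_2$ with $x_1,x_2 \in X$, and fix a character $\gamma \in \LSpec(lX,\epsilon)$. Since characters have unit modulus, $|1 - \gamma(t)| = |1 - \gamma(x_1)\overline{\gamma(x_2)}| = |\gamma(x_2) - \gamma(x_1)|$, so it suffices to show $|\gamma(x_1) - \gamma(x_2)| \le 2\epsilon\sqrt{2K}$; equivalently, the membership $t \in \Bohr(\LSpec(lX,\epsilon),2\epsilon\sqrt{2K})$ for all such $t$ is exactly this claim.

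First I would rewrite the hypothesis on $\gamma$ in Fourier terms. Using $|1-\gamma(x)|^2 = 2 - 2\Re\gamma(x)$, the identity $\widehat{\mu_{lX}\ast\mu_{-lX}}(\gamma) = |\widehat{\mu_{lX}}(\gamma)|^2 \ge 0$, and $\sum_x (\mu_{lX}\ast\mu_{-lX})(x) = 1$, one obtains
\[
\|1 - \gamma\|_{L^2(\mu_{lX}\ast\mu_{-lX})}^2 = 2\bigl(1 - |\widehat{\mu_{lX}}(\gamma)|^2\bigr),
\]
so $\gamma \in \LSpec(lX,\epsilon)$ says precisely that $1 - |\widehat{\mu_{lX}}(\gamma)|^2 \le \epsilon^2/2$. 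Writing $\omega := \E_{y \in lX}\gamma(y) = \overline{\widehat{\mu_{lX}}(\gamma)}$, expanding the square gives $\E_{y \in lX}|\gamma(y) - \omega|^2 = 1 - |\omega|^2 \le \epsilon^2/2$; that is, on $lX$ the character $\gamma$ is $\ell^2$-close to the constant $\omega$.

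The key step is the averaging trick. For every $z \in G$ we have $|\gamma(x_1) - \gamma(x_2)| = |\gamma(z + x_1) - \gamma(z + x_2)|$ because $|\gamma(z)| = 1$; averaging this over $z \in (l-1)X$ and inserting $\omega$ via the triangle inequality gives
\[
|\gamma(x_1) - \gamma(x_2)| \le \E_{z \in (l-1)X}|\gamma(z+x_1) - \omega| + \E_{z \in (l-1)X}|\gamma(z+x_2) - \omega|.
\]
For each $i \in \{1,2\}$, since $x_i \in X$ the translation $z \mapsto z + x_i$ injects $(l-1)X$ into $lX = X + (l-1)X$, so Cauchy--Schwarz (restricting the sum of the nonnegative terms $|\gamma(\cdot) - \omega|^2$ from $lX$ to the image) followed by $|lX| \le K|(l-1)X|$ yields
\[
\E_{z \in (l-1)X}|\gamma(z+x_i) - \omega| \le \Bigl(\tfrac{|lX|}{|(l-1)X|}\Bigr)^{1/2}\bigl(\E_{y \in lX}|\gamma(y)-\omega|^2\bigr)^{1/2} \le \sqrt{K}\cdot\frac{\epsilon}{\sqrt 2}.
\]
Adding the two contributions gives $|\gamma(x_1) - \gamma(x_2)| \le 2\sqrt{K}\,\epsilon/\sqrt{2} = \epsilon\sqrt{2K} \le 2\epsilon\sqrt{2K}$, as required.

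I do not expect a genuine obstacle: the only idea that matters is to average over $(l-1)X$ — using $lX = X + (l-1)X$ — rather than trying to compare $lX$ directly with its translate $lX + t$, which fails since $t$ need not be small relative to $lX$. Everything else is Fourier bookkeeping and one Cauchy--Schwarz. The argument in fact delivers the sharper constant $\epsilon\sqrt{2K}$, which leaves some slack to absorb convention differences (for instance in the normalisation of $L^2(\mu_{lX}\ast\mu_{-lX})$, or in how $0X$ is interpreted in the edge case $l = 1$).
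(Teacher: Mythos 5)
Your proof is correct and is essentially the argument the paper has in mind: it cites \cite[Proposition 4.39]{taovu::} and says the result follows by ``unpacking the definitions'', and the standard unpacking is exactly your computation $\|1-\gamma\|_{L^2(\mu_{lX}\ast\mu_{-lX})}^2=2(1-|\widehat{\mu_{lX}}(\gamma)|^2)$ followed by averaging over $(l-1)X$ using $x_i+(l-1)X\subset lX$ and Cauchy--Schwarz. Your bookkeeping even yields the slightly sharper constant $\epsilon\sqrt{2K}$ in place of $2\epsilon\sqrt{2K}$.
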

The proof is fairly straightforward after unpacking the definitions.

The second objective above -- that the Bohr set not be too large -- is proved using an idea of Schoen \cite{sch::0} introduced to Fre{\u\i}man-type problems by Green and Ruzsa in \cite{greruz::0}.
\begin{proposition}\label{prop.LSpecproperties}
Suppose that $X\subset G$ has $|nX| \leq n^d|X|$ for all $n\geq 1$, and $\epsilon \in (0,1/2]$ is a parameter. Then we have the estimate
\begin{equation*}
|\Bohr(\LSpec(X,\epsilon),1/2\pi)| \leq \exp(O(d\log \epsilon^{-1}d))|X|.
\end{equation*}
\end{proposition}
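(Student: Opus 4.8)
The plan is Fourier-analytic and rests on an iteration idea of Schoen: first show that the large spectrum $\LSpec(X,\epsilon)$ occupies a reasonably large portion of $\widehat{G}$, and then deduce by duality that any Bohr set built from it must be correspondingly small. Throughout, equip $\widehat{G}$ with its Haar probability measure $m$, so that Plancherel reads $\sum_{x\in G}|f(x)|^{2}=\int_{\widehat{G}}|\widehat{f}|^{2}\,dm$, and write $\nu:=\mu_{X}\ast\mu_{-X}$, a probability measure on $G$ with $\widehat{\nu}=|\widehat{\mu_{X}}|^{2}$, so $0\le\widehat{\nu}\le 1$. Unwinding the definition of the large spectrum and using $\|1-\gamma\|_{L^{2}(\nu)}^{2}=2\bigl(1-\widehat{\nu}(\gamma)\bigr)$, one sees that
\[
T:=\LSpec(X,\epsilon)=\{\gamma\in\widehat{G}:\widehat{\nu}(\gamma)\ge 1-\tfrac12\epsilon^{2}\}.
\]
Writing $B:=\Bohr(T,1/2\pi)$, the two steps will be: (a) $m(T)\ge\tfrac12(2k)^{-2d}|X|^{-1}$ for a suitable $k=O(\epsilon^{-2}d\log\epsilon^{-1}d)$; and (b) this forces $|B|\le 2(1-1/2\pi)^{-2}(2k)^{2d}|X|$, which is the asserted bound.

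For step (a) I would iterate $\nu$: for a positive integer $k$ let $\nu^{(k)}$ denote the $k$-fold convolution of $\nu$ with itself, a probability measure supported on $k(X-X)=kX-kX$, so that Cauchy--Schwarz gives $\|\nu^{(k)}\|_{\ell^{2}(G)}^{2}\ge 1/|kX-kX|$. A standard sumset estimate (the Ruzsa triangle inequality) together with the growth hypothesis yields $|kX-kX|\le|2kX|^{2}/|kX|\le(2k)^{2d}|X|$, so $\|\nu^{(k)}\|_{\ell^{2}(G)}^{2}\ge(2k)^{-2d}|X|^{-1}$. On the Fourier side, by Plancherel and splitting $\widehat{G}=T\cup(\widehat{G}\setminus T)$, and using $\widehat{\nu}\le 1$ on $T$, $0\le\widehat{\nu}<1-\tfrac12\epsilon^{2}$ off $T$, and $\|\nu\|_{\ell^{2}(G)}^{2}\le\|\nu\|_{\ell^{\infty}(G)}\|\nu\|_{\ell^{1}(G)}=\nu(0)=|X|^{-1}$,
\[
\|\nu^{(k)}\|_{\ell^{2}(G)}^{2}=\int_{\widehat{G}}\widehat{\nu}^{\,2k}\,dm\ \le\ m(T)+\bigl(1-\tfrac12\epsilon^{2}\bigr)^{2k-2}\int_{\widehat{G}}\widehat{\nu}^{\,2}\,dm\ \le\ m(T)+\bigl(1-\tfrac12\epsilon^{2}\bigr)^{2k-2}|X|^{-1}.
\]
Taking $k$ to be the least integer with $(1-\tfrac12\epsilon^{2})^{2k-2}\le\tfrac12(2k)^{-2d}$ — for which $k=O(\epsilon^{-2}d\log\epsilon^{-1}d)$ — makes the error term at most half the lower bound, giving $m(T)\ge\tfrac12(2k)^{-2d}|X|^{-1}$.

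For step (b), note that if $\gamma\in T$ then $|\gamma(x)-1|\le 1/2\pi$ for every $x\in B$ by definition of the Bohr set, so $|\widehat{\mu_{B}}(\gamma)-1|=\bigl|\E_{x\in B}(\overline{\gamma(x)}-1)\bigr|\le 1/2\pi$ and hence $|\widehat{\mu_{B}}(\gamma)|\ge 1-1/2\pi$. Plancherel applied to $\mu_{B}$ then gives
\[
\frac{1}{|B|}=\sum_{x\in G}\mu_{B}(x)^{2}=\int_{\widehat{G}}|\widehat{\mu_{B}}|^{2}\,dm\ \ge\ \bigl(1-\tfrac{1}{2\pi}\bigr)^{2}m(T)\ \ge\ \frac{(1-1/2\pi)^{2}}{2(2k)^{2d}|X|},
\]
so $|B|\le 2(1-1/2\pi)^{-2}(2k)^{2d}|X|=\exp\bigl(O(d\log\epsilon^{-1}d)\bigr)|X|$, as required.

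The crux — and the only place where quantitative care is needed — is the choice of the iteration length $k$ in step (a): one has to balance the geometric gain $(1-\epsilon^{2}/2)^{2k}$ against the loss $(2k)^{2d}$, and it is essential that this loss be extracted from the growth of $X$ on the single scale $2k$ (via the Ruzsa triangle inequality), since bounding $|kX-kX|$ instead through Pl\"unnecke from the doubling $|2X|\le 2^{d}|X|$ would only give $2^{O(kd)}$ and would inflate the final exponent by a factor of $d$. Everything else is a matter of unwinding the definitions of $\LSpec$ and $\Bohr$ and keeping the Fourier normalisations consistent.
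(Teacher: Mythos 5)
Your proof is correct and follows essentially the route the paper has in mind: Schoen's iteration idea (lower-bounding the measure of $\LSpec(X,\epsilon)$ by comparing $\|\nu^{(k)}\|_{\ell^2}^2$ with $|kX-kX|^{-1}$ and exploiting polynomial growth at the single scale $2k$) followed by the standard duality bounding a Bohr set by the measure of its frequency set. The only cosmetic difference is that you run the final Plancherel step through $\mu_B$ rather than through the $\ell^2$-mass of $1_X$ as the paper's one-line gloss suggests; both are equivalent one-line computations.
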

The proof of this is via the Fourier transform which shows that the large spectrum of the specified Bohr set must support a lot of the $\ell^2$-mass of $1_X$.

To deal with similar concerns to those of our the final objective Ruzsa introduced the geometry of number to Fre{\u\i}man-type theorems in \cite{ruz::9}.  There is a great deal to say about this,  and we direct the reader to \cite[Chapter 3.5]{taovu::} for a much more comprehensive discussion.  For our purposes we have the following proposition.
\begin{proposition}\label{prop.coset}
Suppose that $G$ is an Abelian group, $d \in \N$ and $B$ is a Bohr set such that
\begin{equation*}
|\Bohr(\Gamma,(3d+1)\delta)|<2^d|\Bohr(\Gamma,\delta)| \textrm{ for some } \delta < 1/4(3d+1).
\end{equation*}
Then $\Bohr(\Gamma,\delta)$ is a $d$-dimensional convex coset progression.
\end{proposition}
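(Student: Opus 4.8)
The plan is to place $B=\Bohr(\Gamma,\delta)$ into the classical ``lattice points in a symmetric convex body'' model for Bohr sets, and then to use Chang's covering lemma (Lemma~\ref{lem.ccl}) to control the dimension. First, observe that the hypothesis forces $B$ to be finite: since $\Bohr(\Gamma,\delta)\subset\Bohr(\Gamma,(3d+1)\delta)$ the two cardinalities cannot be ``$\infty<\infty$''. Let $\beta\colon G\rightarrow (S^1)^\Gamma$ be the homomorphism $x\mapsto(\gamma(x))_{\gamma\in\Gamma}$ and $K=\ker\beta$; since $K$ lies in every Bohr set over $\Gamma$, and since $\Z^m$ is free (so a homomorphism out of $\Z^m$ lifts along $G\rightarrow G/K$), it suffices to exhibit the corresponding Bohr set in $G/K$ as a convex progression, so I would replace $G$ by $G/K$ and assume $\beta$ injective. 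Lift the torus: let $\pi\colon\R^\Gamma\rightarrow(S^1)^\Gamma$, $t\mapsto(e^{2\pi i t_\gamma})_\gamma$, and $\Lambda:=\pi^{-1}(\beta(G))$, a subgroup of $\R^\Gamma$ containing $\Z^\Gamma$; then $\psi:=\beta^{-1}\circ\pi$ is a surjection $\Lambda\rightarrow G$ with kernel $\Z^\Gamma$. For $\rho\in(0,2)$ let $Q_\rho\subset\R^\Gamma$ be the box of half-width $\frac{1}{\pi}\arcsin(\rho/2)<\frac12$ in each coordinate. Unwinding the definitions, $\psi$ is injective on $\Lambda\cap Q_\rho$ and maps it bijectively onto $\Bohr(\Gamma,\rho)$, and (by convexity of $\arcsin$ on $[0,1]$) $tQ_\rho\subset Q_{t\rho}$ whenever $1\le t$ and $t\rho\le 2$; this last point, applied with $\rho=\delta$ and $t=3d+1$, is exactly where the hypothesis $\delta<\frac{1}{4(3d+1)}$ enters, keeping $(3d+1)\delta$ safely below $2$.

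Now let $V$ be the $\R$-span of the finite set $\Lambda\cap Q_\delta$; since that set is finite, $V$ is \emph{finite}-dimensional, say $\dim V=m$, and we put $\Lambda_V:=\Lambda\cap V$. Finiteness of $\Lambda\cap Q_\delta$ makes $\Lambda_V$ discrete near $0$, so $\Lambda_V$ is a genuine lattice of rank $m$ in $V\cong\R^m$, and $\Lambda\cap Q_\delta\subset V$ by construction. Choosing a $\Z$-basis of $\Lambda_V$ identifies it with $\Z^m$; composing with $\psi$ gives a homomorphism $\phi\colon\Z^m\rightarrow G$, and pulling back the symmetric convex body $Q_\delta\cap V$ to a symmetric convex body $Q\subset\R^m$ yields
\begin{equation*}
\Bohr(\Gamma,\delta)=\psi(\Lambda_V\cap(Q_\delta\cap V))=\phi(Q\cap\Z^m),
\end{equation*}
an $m$-dimensional convex progression. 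It remains to check $m\le d$. Put $X:=\Lambda\cap Q_\delta$: this is symmetric, contains $0$, and generates a subgroup of rank $m$. Its $(3d+1)$-fold sumset satisfies $(3d+1)X\subset\Lambda\cap(3d+1)Q_\delta\subset\Lambda\cap Q_{(3d+1)\delta}$, and $\psi$ is injective on this last set, so
\begin{equation*}
|(3d+1)X|\le|\Bohr(\Gamma,(3d+1)\delta)|<2^d|\Bohr(\Gamma,\delta)|=2^d|X|,
\end{equation*}
whence Lemma~\ref{lem.ccl} (with $k=d$) gives $|nX|\le n^d|X|$ for all $n\ge1$. But $X$ contains $m$ linearly independent vectors, so $nX$ contains every integer combination of them of $\ell^1$-norm at most $n$ (pad short sums with copies of $0$), giving $|nX|\ge\binom{n+m}{m}$; letting $n\rightarrow\infty$ forces $m\le d$. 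By monotonicity of dimension $\Bohr(\Gamma,\delta)$ is a $d$-dimensional convex progression, and undoing the reduction to $G/K$ presents the original Bohr set as $K+\widetilde\phi(Q\cap\Z^m)$ for a lift $\widetilde\phi$ of $\phi$ --- a $d$-dimensional convex coset progression.

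I expect the main obstacle to be not the geometry of numbers itself --- the argument above has reduced it to the soft fact that a symmetric generating set of a rank-$m$ group has polynomial sumset growth of degree $m$, combined with Lemma~\ref{lem.ccl} --- but rather the careful bookkeeping in the lattice model: verifying that finiteness of $B$ really does make $\Lambda_V$ a discrete lattice, that $\psi$ is injective on precisely the convex bodies $Q_\rho$ that occur (and maps them onto the right Bohr sets), and that the sumset dilation $nX$ and the radius enlargement $\Bohr(\Gamma,n\delta)$ are comparable in the direction and with the constants needed for the thresholds $3d+1$ and $\frac{1}{4(3d+1)}$ to mesh. If $\Gamma$ is infinite one should also remark at the outset either that $G$ may be replaced by the (finitely generated) subgroup generated by the relevant Bohr sets, or --- as used above --- simply that $V=\R\text{-span}(\Lambda\cap Q_\delta)$ is automatically finite-dimensional, which already localises all the work to a finite-dimensional space.
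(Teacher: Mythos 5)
Your proof is correct and follows essentially the route the paper sketches: your inverse map $\psi^{-1}$ restricted to a Bohr set is exactly Ruzsa's embedding $R_\Gamma$ (the coordinatewise argument divided by $2\pi$), which realises $\Bohr(\Gamma,\delta)$ as the lattice points of a symmetric convex body, and the dimension bound then comes from Chang's covering lemma (Lemma \ref{lem.ccl}) exactly as intended. The remaining steps (quotienting by $\ker\beta$, finite-dimensionality of the span, the $\arcsin$ convexity estimate) are routine bookkeeping that you handle correctly.
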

The proof of this involves the covering lemma of Chang mentioned earlier and a very important embedding defined by Ruzsa
\begin{align*}
R_\Gamma:G& \rightarrow  C(\Gamma,\R)\\ x & \mapsto   R_\Gamma(x):\Gamma \rightarrow \R; \gamma \mapsto \frac{1}{2\pi}\arg(\gamma(x)),
\end{align*}
where the argument is taken to lie in $(-\pi,\pi]$.  The map $R_\Gamma$ acts as something called a Fre{\u\i}man morphism\footnote{We direct the unfamiliar reader to \cite[Chapter 5.3]{taovu::}.} which lets us embed Bohr sets into a lattice.

With those three ingredients it is possible to stitch together a proof of Theorem \ref{thm.polygrowi} and the section is complete.

\bibliographystyle{halpha}

\bibliography{references}

\begin{thebibliography}{Gow98}

\bibitem[BS94]{balsze::}
A.~Balog and E.~Szemer{\'e}di.
\newblock A statistical theorem of set addition.
\newblock {\em Combinatorica}, 14(3):263--268, 1994.

\bibitem[Cha02]{cha::0}
M.-C. Chang.
\newblock A polynomial bound in {F}re{\u\i}man's theorem.
\newblock {\em Duke Math. J.}, 113(3):399--419, 2002.

\bibitem[CS10]{crosis::}
E.~S. Croot and O.~Sisask.
\newblock A probabilistic technique for finding almost-periods of convolutions.
\newblock {\em Geom. Funct. Anal.}, 20(6):1367--1396, 2010.

\bibitem[Fou77]{fou::}
J.~J.~F. Fournier.
\newblock Sharpness in {Y}oung's inequality for convolution.
\newblock {\em Pacific J. Math.}, 72(2):383--397, 1977.

\bibitem[Fre66]{fre::}
G.~A. Fre{\u\i}man.
\newblock {\em Nachala strukturnoi teorii slozheniya mnozhestv}.
\newblock Kazan. Gosudarstv. Ped. Inst, 1966.

\bibitem[Gow98]{gow::4}
W.~T. Gowers.
\newblock A new proof of {S}zemer\'edi's theorem for arithmetic progressions of
  length four.
\newblock {\em Geom. Funct. Anal.}, 8(3):529--551, 1998.

\bibitem[GR07]{greruz::0}
B.~J. Green and I.~Z. Ruzsa.
\newblock Fre{\u\i}man's theorem in an arbitrary abelian group.
\newblock {\em J. Lond. Math. Soc. (2)}, 75(1):163--175, 2007.

\bibitem[KK10]{katkoe::}
N.~H. Katz and P.~Koester.
\newblock On additive doubling and energy.
\newblock {\em SIAM J. Discrete Math.}, 24(4):1684--1693, 2010.

\bibitem[LR75]{lopros::}
J.~M. L{\'o}pez and K.~A. Ross.
\newblock {\em Sidon sets}.
\newblock Marcel Dekker Inc., New York, 1975.
\newblock Lecture Notes in Pure and Applied Mathematics, Vol. 13.

\bibitem[Pet11]{pet::}
G.~Petridis.
\newblock New proofs of {P}l{\"u}nnecke-type estimates for product sets in
  groups.
\newblock 2011, arXiv:1101.3507.

\bibitem[Pl{\"u}69]{plu::}
H.~Pl{\"u}nnecke.
\newblock {\em Eigenschaften und {A}bsch\"atzungen von {W}irkungsfunktionen}.
\newblock BMwF-GMD-22. Gesellschaft f\"ur Mathematik und Datenverarbeitung,
  Bonn, 1969.

\bibitem[Ruz94]{ruz::9}
I.~Z. Ruzsa.
\newblock Generalized arithmetical progressions and sumsets.
\newblock {\em Acta Math. Hungar.}, 65(4):379--388, 1994.

\bibitem[Ruz09]{ruz::03}
I.~Z. Ruzsa.
\newblock Sumsets and structure.
\newblock In {\em Combinatorial number theory and additive group theory}, Adv.
  Courses Math. CRM Barcelona, pages 87--210. Birkh\"auser Verlag, Basel, 2009.

\bibitem[San10]{san::00}
T.~Sanders.
\newblock On the {B}ogolyubov-{R}uzsa lemma.
\newblock Anal. PDE, to appear, 2010, arXiv:1011.0107.

\bibitem[Sch03]{sch::0}
T.~Schoen.
\newblock Multiple set addition in {$\mathbb{Z}\sb p$}.
\newblock {\em Integers}, 3:A17, 6 pp. (electronic), 2003.

\bibitem[Sch11]{sch::1}
T.~Schoen.
\newblock Near optimal bounds in {F}re{\u\i}man's theorem.
\newblock {\em Duke Math. J.}, 158:1--12, 2011.

\bibitem[TV06]{taovu::}
T.~C. Tao and H.~V. Vu.
\newblock {\em Additive combinatorics}, volume 105 of {\em Cambridge Studies in
  Advanced Mathematics}.
\newblock Cambridge University Press, Cambridge, 2006.

\end{thebibliography}

\end{document}